\definecolor{labelkey}{gray}{.20}
\definecolor{refkey}{gray}{.20}
\definecolor{eqkey}{gray}{.20}
\def\mathclap#1{\text{\hbox to 0pt{\hss$\mathsurround=0pt#1$\hss}}}
\newcommand{\beq}{\begin{equation}}
\newcommand{\eeq}{\end{equation}}
\newtheorem{theorem}{Theorem}
\newtheorem*{theorem*}{Theorem}
\newtheorem{lemma}[theorem]{Lemma}
\newtheorem*{lemma*}{Lemma}
\newtheorem{proposition}[theorem]{Proposition}
\newtheorem{conjecture}[theorem]{Conjecture}
\newtheorem{cory}[theorem]{Corollary}
\theoremstyle{definition}
\newtheorem{remark}[theorem]{Remark}
\newtheorem{remarks}[theorem]{Remarks}
\newtheorem*{rep@theorem}{\rep@title}
\newcommand{\newreptheorem}[2]{%
\newenvironment{rep#1}[1]{%
 \def\rep@title{#2 \ref{##1}}%
 \begin{rep@theorem}}%
 {\end{rep@theorem}}}
\newcommand{\al}{{\alpha}}
\newcommand{\be}{{\beta}}
\newcommand{\eps}{{\varepsilon}}
\newcommand{\de}{{\delta}}
\newcommand{\ga}{{\gamma}}
\newcommand{\Ga}{{\Gamma}}
\newcommand{\ka}{{\kappa}}
\newcommand{\la}{{\lambda}}
\newcommand{\si}{{\sigma}}
\renewcommand{\phi}{{\varphi}}
\newcommand\aut{\operatorname{Aut}}
\newcommand\R{\mathbb R}
\newcommand\Q{\mathbb Q}
\newcommand\Z{\mathbb Z}
\newcommand\C{\mathbb C}
\newcommand\N{\mathbb N}
\newcommand\ZZ{\mathbf Z}
\newcommand{\actson}{\curvearrowright}
\newcommand{\wt}[1]{{\widetilde {#1}}}
\newcommand{\wh}[1]{{\widehat {#1}}}
\newcommand{\cal}[1]{{\mathcal #1}}
\newcommand*\if@single[3]{%
  \setbox0\hbox{${\mathaccent"0362{#1}}^H$}%
  \setbox2\hbox{${\mathaccent"0362{\kern0pt#1}}^H$}%
  \ifdim\ht0=\ht2 #3\else #2\fi
  }
\newcommand*\rel@kern[1]{\kern#1\dimexpr\macc@kerna}
\newcommand*\widebar[1]{\@ifnextchar^{{\wide@bar{#1}{0}}}{\wide@bar{#1}{1}}}
\newcommand*\wide@bar[2]{\if@single{#1}{\wide@bar@{#1}{#2}{1}}{\wide@bar@{#1}{#2}{2}}}
\newcommand*\wide@bar@[3]{%
  \begingroup
  \def\mathaccent##1##2{%
    \if#32 \let\macc@nucleus\first@char \fi
    \setbox\z@\hbox{$\macc@style{\macc@nucleus}_{}$}%
    \setbox\tw@\hbox{$\macc@style{\macc@nucleus}{}_{}$}%
    \dimen@\wd\tw@
    \advance\dimen@-\wd\z@
    \divide\dimen@ 3
    \@tempdima\wd\tw@
    \advance\@tempdima-\scriptspace
    \divide\@tempdima 10
    \advance\dimen@-\@tempdima
    \ifdim\dimen@>\z@ \dimen@0pt\fi
    \rel@kern{0.6}\kern-\dimen@
    \if#31
      \overline{\rel@kern{-0.6}\kern\dimen@\macc@nucleus\rel@kern{0.4}\kern\dimen@}%
      \advance\dimen@0.4\dimexpr\macc@kerna
      \let\final@kern#2%
      \ifdim\dimen@<\z@ \let\final@kern1\fi
      \if\final@kern1 \kern-\dimen@\fi
    \else
      \overline{\rel@kern{-0.6}\kern\dimen@#1}%
    \fi
  }%
  \macc@depth\@ne
  \let\math@bgroup\@empty \let\math@egroup\macc@set@skewchar
  \mathsurround\z@ \frozen@everymath{\mathgroup\macc@group\relax}%
  \macc@set@skewchar\relax
  \let\mathaccentV\macc@nested@a
  \if#31
    \macc@nested@a\relax111{#1}%
  \else
    \def\gobble@till@marker##1\endmarker{}%
    \futurelet\first@char\gobble@till@marker#1\endmarker
    \ifcat\noexpand\first@char A\else
      \def\first@char{}%
    \fi
    \macc@nested@a\relax111{\first@char}%
  \fi
  \endgroup
}
\newcommand{\ov}{\overline}
\newcommand*\mcapinn[2]{\vcenter{\hbox{$\mathsurround=0pt
  \ifx\displaystyle#1\textstyle\else#1\fi\bigcap$}}}
\newcommand*\mcupinn[2]{\vcenter{\hbox{$\mathsurround=0pt
  \ifx\displaystyle#1\textstyle\else#1\fi\bigcup$}}}
\newcommand\tr{\operatorname{tr}}
\newcommand\diag{\operatorname{Diag}}
\renewcommand{\ge}{\geqslant}
\renewcommand{\le}{\leqslant}
\newcommand{\MM}{\textsc M}
\renewcommand{\SS}{\textsc S}
\newcommand{\0}{{\texttt{\large 0}}}
\newcommand{\1}{{\texttt{\large 1}}}
\newcommand{\2}{{\texttt{\large 2}}}
\newcommand{\D}{{\texttt{\large D}}}
\renewcommand{\-}{{\texttt{\small $\bullet$}}}
\newcommand{\ppp}{{\texttt{\small $\bullet$}}}
\renewcommand{\u}{\underline}
\newcommand{\s}{\textsc}
\newcommand{\red}{\color{red}}
\begin{document}

\title{Group ring elements with large spectral density}
\author{{\L}ukasz Grabowski}
\let\thefootnote\relax\footnote{\hspace{-18pt}\textit{Email:} \texttt{graboluk@gmail.com}}
\let\thefootnote\relax\footnote{\hspace{-18pt}2010 \textit{Mathematics Subject Classification:} 20C07, 20F65, 57M10.}
\let\thefootnote\relax\footnote{\hspace{-18pt}\textit{Key words and phrases:} $l^2$-invariants, Novikov-Shubin invariants, Determinant approximation, Group rings}
\maketitle
\vspace{-30pt}
\begin{center}
\textit{Mathematics Institute, University of Warwick, Coventry, CV4 7AL, UK}\end{center}
\begin{abstract}
Given $\de>0$ we construct a group $G$ and a group ring element $S\in \Z[G]$ such that the spectral measure $\mu$ of $S$ fulfils $\mu((0,\eps)) > \frac{C}{|\log(\eps)|^{1+\de}}$ for small $\eps$. In particular the Novikov-Shubin invariant of any such $S$ is $0$. The constructed examples show that the best known upper bounds on $\mu((0,\eps))$ are not far from being optimal.
\keywords{$l^2$-invariants \and Novikov-Shubin invariants \and Determinant approximation \and Group rings}


\end{abstract}

\maketitle

\section{Introduction}

The most important technical problem in the general theory of $l^2$-invariants is establishing bounds on the spectral density of group ring elements. Let us illustrate it with three examples. For an introduction to $l^2$-invariants see  \cite{Eckmann_intro} or \cite{Lueck:Big_book} for a more comprehensive treatment. Spectral measures of group ring elements are discussed in the next section.

\vspace{3pt}
\textbf{(i)} The celebrated \textit{L\"uck approximation theorem} states that the $l^2$-Betti numbers of a normal cover of a finite CW-complex are limits of the ordinary Betti numbers of intermediate finite covers, normalized by the cardinality of the fibers. This is an easy corollary of the following statement. For every $C>0$  there exists a function $f\colon \R_+\to \R_+$ such that $f(\eps)\to 0$ when $\eps\to 0$, and such that for every residually finite group $G$, and every self-adjoint $T$ in the integral group ring $\Z[G]$ whose $l^1$-norm is bounded by $C$, we have
\beq\label{eq-approx-bound}
\mu_T((0,\eps)) < f(\eps).
\eeq
In other words, L\"uck approximation follows from \textit{having any uniform bound at all on the spectral density around $0$}. L\"uck \cite{lueck_approximating_l2_invariants_by_their_finite_dimensional_analogues} showed that one can take $f(\eps) := \frac{C'}{|\log(\eps)|}$, where $C'$ depends on $C$.

\vspace{3pt}
\textbf{(ii)} Another $l^2$-invariant, the \textit{$l^2$-torsion}, is not known to be well-defined for arbitrary normal covers. It is well-defined for all normal covers with a given deck transformation group $G$ if and only if for every self-adjoint $T\in \Z[G]$ the integral
$$
\int_{0^+}^1\log(x) \,d\mu_T(x)
$$
is convergent. This is clearly a statement about the density of $\mu_T$ around $0$. The convergence of the integral above was established by Clair \cite{MR1704205} and Schick \cite{MR1828605} for a large class of groups $G$, including all residually-finite ones. As a consequence, using the little-o notation, we have
\beq\label{eq-schick-bound}
\mu_T((0,\eps)) = o\left(\frac{1}{|\log(\eps)|}\right),
\eeq
which is the best \textit{general} upper bound known (see Remark \ref{rem-4}(iii) below).

\vspace{3pt}
\textbf{(iii)} A major open problem, known as the \textit{determinant approximation conjecture}, is the analog of L\"uck approxination for the $l^2$-torsion. It is currently known only when $G$ has the infinite cyclic group $\ZZ$ as a subgroup of finite index (see \cite[Lemma 13.53]{Lueck:Big_book}). It is not difficult to show that  if, under the assumption stated in the example {(i)}, for some $\de>0$ we had 
$$
\mu_T((0,\eps)) < \frac{C'}{|\log^{1+ \de}(\eps)|},
$$
then the determinant approximation conjecture would be true.

\vspace{5pt}

For a long time it has not been known if there actually exist group ring elements whose spectral density is as large as the best known general bound \eqref{eq-schick-bound} suggests. This is reflected in the following conjecture made by Lott and L\"uck.

\begin{conjecture}[Lott-L{\"u}ck \cite{lott-lueck-l2-topological-invariants-of-3-manifolds}]\label{conj-lott-lueck} Let $G$ be a discrete group. For a self-adjoint $T\in \Z[G]$ there exist $C,\eta>0$ such that for sufficiently small $\eps$ we have 
$$
\mu_T((0,\eps)) < C\eps^\eta.
$$
\end{conjecture}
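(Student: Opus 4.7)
The paper's abstract signals that Conjecture~\ref{conj-lott-lueck} is to be \emph{disproved}, so rather than pursue a polynomial upper bound (which the main result rules out), the plan is to construct, for each $\delta>0$, a group $G$ and a self-adjoint $S\in\Z[G]$ whose spectral measure satisfies the nearly-logarithmic lower bound $\mu_S((0,\eps))>C/|\log(\eps)|^{1+\delta}$. Any such example contradicts the polynomial estimate $\mu_T((0,\eps))<C'\eps^\eta$ and settles the conjecture in the negative; by the bound \eqref{eq-schick-bound} of Clair--Schick, such a construction would also be essentially optimal.

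A natural family of candidates are lamplighter-type groups $G=A\wr H$, or more generally semidirect products whose normal part is abelian and accessible to Fourier analysis. For such $G$, a judiciously chosen self-adjoint $S\in\Z[G]$ admits a direct-integral decomposition over the Pontryagin dual of the abelian factor into a measurable family of finite-dimensional matrices $\{S_\chi\}$, and the spectral measure of $S$ is the average of the counting measures of the spectra of the $S_\chi$. Estimating $\mu_S((0,\eps))$ is then a combinatorial/measure-theoretic problem: how large is the set of parameters $\chi$ for which $S_\chi$ has an eigenvalue in $(0,\eps)$? By building $G$ as a carefully tuned infinite direct product or iterated wreath product, and by selecting $S$ to be a sum of generators that produces a ``layered'' spectrum with contributions in intervals $(0,\eps_n)$ for an appropriate sequence $\eps_n\to 0$, one should be able to push the density near $0$ up to the desired logarithmic rate.

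The main obstacle is engineering a \emph{lower} bound of the precise form $C/|\log\eps|^{1+\delta}$ rather than something substantially weaker. One needs to show that at every scale $\eps$ a positive-measure set of $\chi$ produces a matrix with a small eigenvalue, and moreover that the exponent $1+\delta$ matches: this amounts to counting combinatorial configurations with a prescribed density profile. Probabilistic or number-theoretic input, for instance control on the distribution of divisors of integers or on return probabilities on a tailored graph, is likely to enter at this point. A further constraint absent from constructions inside the group von Neumann algebra $\mathcal{N}(G)$ is that $S$ must have \emph{integer} coefficients in finitely many group elements; this rigidity couples the achievable decay rate to the arithmetic of the generators, and is what makes matching the Clair--Schick bound up to the exponent $1+\delta$ genuinely delicate.
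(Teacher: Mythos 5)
You are right that the statement is to be refuted, and your general frame---pass to a group of the form (abelian) $\rtimes$ (acting group), use Pontryagin duality to turn $S\in\Z[\Ga\ltimes A]$ into an element of $\Ga\ltimes L^\infty(X)$, and read off $\mu_S((0,\eps))$ from a measurable family of finite-dimensional pieces---is indeed the frame the paper works in (Lemma \ref{lem-pass-to-group} and the decomposition \eqref{eq-mu3}). But what you have written is a plan, not a proof: every quantitative step is deferred (``one should be able to push the density up to the desired logarithmic rate'', ``probabilistic or number-theoretic input is likely to enter''). The two ingredients that actually produce the bound are missing. First, you need a mechanism converting the \emph{size} of a finite fiber into an \emph{exponentially} small eigenvalue: in the paper the fibers over a positive-measure set are weighted path graphs of length $m$ (arising as computation chains of a Turing dynamical system), and Lemma \ref{lem-small-eigen} shows their adjacency operator has a positive eigenvalue $<D^m$ with $D<1$. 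It is exactly this exponential relation that turns a polynomial lower bound on the measure of long components into a $1/|\log\eps|^{1+\de}$ lower bound on $\mu_S((0,\eps))$; a ``layered spectrum with contributions in intervals $(0,\eps_n)$'' with no such length-to-eigenvalue estimate gives nothing of the required form. Second, you need a lower bound of order $l^{-d}$ (with $d$ arbitrarily small) on the measure of points whose component is such a path of length $\ge l$, together with a verification that the components really are paths and not larger graphs---the paper's conditions (A), (B), (C), established in Proposition \ref{prop-is-good} by a case-by-case check that distinct instructions of the ``carry'' algorithm have disjoint resulting sets. This non-interference issue is precisely the hard point (the paper flags it as the obstacle to the stronger Conjecture \ref{conj-my}), and your proposal does not engage with it.

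A smaller but substantive discrepancy: your finite-dimensional pieces $S_\chi$ are indexed by characters $\chi$ of the abelian normal subgroup, as in classical lamplighter computations. In the paper the decomposition is not by characters but by points $x$ of the dual measure space $X$, and the finite-dimensional blocks are the operators $S(x)$ on the finite, simply-connected orbit graphs $\cal G(x)$ of the dynamics; the groups used are of the form $[\oplus_\Z \ZZ_2^N \rtimes (\aut(\ZZ_2^N)\wr \ZZ)]\times(\ZZ_2^2\rtimes\aut(\ZZ_2^2))$, not a plain wreath product, precisely so that the dynamics can simulate the base-$(\D+1)$ carry algorithm whose computation chains have length $(\D+1)^j$ on sets of measure roughly $((\D+1)/(\D+2))^j$, which is what makes the exponent $1+\de$ (with $\de\to 0$ as $\D\to\infty$) come out. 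Until you specify a concrete $G$, a concrete $S$, the family of finite blocks, the eigenvalue estimate, and the measure estimate, the claimed refutation of the polynomial bound is not established.
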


Note that the bound in Conjecture \ref{conj-lott-lueck} is very far away from the best known bound \eqref{eq-schick-bound}: for every $\eta>0$ and sufficiently small $\eps$ we have $\eps^\eta < \frac{1}{|\log(\eps)|}$. However, in this note we show that \eqref{eq-schick-bound} is not too far away from an optimal  bound.

\begin{theorem}\label{thm-main}
For every $\de>0$ there is a group $G_\de$ and a self-adjoint element $S_\de\in\Z[G_\de]$ such that for some constant $C>0$  we have 
\beq\label{eq-claimed-inequality}
    \mu_{S_\de} ((0,\eps_i)) > \frac{C}{|\log(\eps_i)|^{1+\de}}
\eeq
for some sequence of positive $\eps_i$ converging to $0$. In particular, Conjecture \ref{conj-lott-lueck} is false for  $S_\de$.
\end{theorem}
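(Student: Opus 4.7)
The plan is to build $G_\de$ as a (variant of a) lamplighter-type wreath product $A\wr H$, with $A$ a finite or locally finite abelian group and $H$ an amenable base such as $\Z$, and to take $S_\de$ to be a self-adjoint element of $\Z[G_\de]$ expressed as a polynomial in the $H$-generators with lamp coefficients. Fourier analysis on the lamps $A^{(H)}$ decomposes $S_\de$ as a direct integral, indexed by characters $\chi$ of $A^{(H)}$, of finite-rank convolution-type operators $S_\de^{\chi}$ on $\ell^2(H)$. The spectral measure $\mu_{S_\de}$ is then a weighted average of the eigenvalue counting measures of the $S_\de^{\chi}$, and the inequality \eqref{eq-claimed-inequality} reduces to a combinatorial counting problem: exhibit enough characters $\chi$ whose operators $S_\de^\chi$ have eigenvalues in $(0,\eps_i)$.

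For the classical lamplighter $\Z/2\wr\Z$ with a well-chosen generator set, the operators $S^\chi$ are known (cf.\ the Grigorchuk--Linnell--Schick--{\.Z}uk computation) to act on $\ell^2(\Z)$ like discrete Laplacians on finite intervals whose length $n=n(\chi)$ is read off from the support of $\chi$. Such Laplacians have smallest positive eigenvalue of order $n^{-2}$, so $|\log\eps|\asymp n$, and counting the characters of each support length yields density $\mu_S((0,\eps))\sim 1/|\log\eps|$. To obtain the stronger bound $C/|\log\eps|^{1+\de}$, I would enlarge the alphabet --- replacing $\Z/2$ by a sequence $\Z/p_k$, or taking a locally finite $A=\bigoplus_k \Z/p_k$ with carefully tuned $p_k$ --- so that along a chosen sequence of interval lengths $n_i$, the count of characters contributing eigenvalues below $\eps_i\asymp n_i^{-2}$ is inflated by an extra factor of $n_i^\de\asymp|\log\eps_i|^\de$ relative to the classical case. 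The extra room in the alphabet converts directly into eigenvalue multiplicity at small scales.

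The main obstacles are twofold. First, the extra multiplicity must be produced without sacrificing integrality of $S_\de$: the natural Fourier constructions yield algebraic coefficients, so one must realise the scheme inside a single wreath product $\bigl(\bigoplus_k F_k\bigr)\wr\Z$ and encode $S_\de$ as an honest integer polynomial in finitely many generators whose characters then decompose according to the desired combinatorics. Second, one must confirm that the relevant eigenvalues land \emph{strictly} inside $(0,\eps_i)$ rather than at $0$; this requires identifying eigenvectors explicitly (most likely as sinusoidal modes on the underlying finite intervals) and a short transversality argument showing that the kernels of the $S_\de^\chi$ do not swallow the counting mass. Once both issues are handled, \eqref{eq-claimed-inequality} follows by summing contributions of the characters realising each interval length $n_i$, with $\eps_i$ taken as a small constant multiple of $n_i^{-2}$.
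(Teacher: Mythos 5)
Your general framework (decompose $S_\de$ over the Pontryagin dual of the lamps, reduce to counting small eigenvalues of the finite blocks into which each fibre operator splits, then average against the Haar weight of the characters) is essentially the computational set-up of Section 2 of the paper. But the quantitative heart of your scheme fails, for a Lifshitz-tail reason. Under the product (Bernoulli/Haar) measure on the dual, the weight of the characters whose support forces a block of length $n$ is exponentially small in $n$, of order $c^n$ with $c<1$; meanwhile the smallest positive eigenvalue of a discrete Laplacian on an interval of $n$ vertices is only polynomially small, of order $n^{-2}$. So at scale $\eps\asymp n^{-2}$ you have $|\log\eps|\asymp\log n$ (not $\asymp n$, as you wrote), and the mass you collect is of order $c^{\,\eps^{-1/2}}=\exp(-c'\eps^{-1/2})$ --- far below $\eps^\eta$ for every $\eta$, so this construction actually satisfies the Lott--L\"uck bound with room to spare rather than violating it. Enlarging the alphabet to $\bigoplus_k\Z/p_k$ only changes the constant $c$; it cannot turn an exponentially small character weight into a polynomially small one, nor an $n^{-2}$ eigenvalue into an exponentially small one, so no tuning of the $p_k$ produces the extra factor $|\log\eps_i|^{\de}$ you want, let alone the jump from $\exp(-c/\sqrt{\eps})$ to $1/|\log\eps|^{1+\de}$.

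Two ideas are needed, and both are absent from your proposal. First, the finite blocks must have a positive eigenvalue that is \emph{exponentially} small in the block length: the paper achieves this with the tridiagonal matrices $U_m$ (diagonal entries $5$ except for a single $1$, off-diagonal entries $2$), which arise from $S=5+2(T+T^*)-4\chi_I$ and satisfy $\det U_m=1$ while roughly a third of their eigenvalues are $\ge 5$, forcing $\la_1\le 5^{-m/3}$; an interval Laplacian can never do this. Second, blocks of length $l$ must occur with probability only \emph{polynomially} small in $l$, which is impossible if the block length is read off directly from the support of a character under a product measure; the paper gets around this by running a Turing dynamical system (the carry algorithm on a $(\D{+}1)$-ary tape) with $\Ga=(\aut(\ZZ_2^N)\wr\ZZ)\times\aut(\ZZ_2^2)$ acting on $X=(\ZZ_2^N)^{\ZZ}\times\ZZ_2^2$: a cylinder constraint on only $j$ coordinates, of measure about $\bigl((\D+1)/(\D+2)\bigr)^j$, generates an orbit segment (hence a block) of length at least $(\D+1)^j$, so the measure of points lying in blocks of length $\ge l$ is $\ge C/l^{d}$ with $d\to 0$ as the alphabet grows. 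Your remaining concerns (integrality of the coefficients, eigenvalues avoiding $0$) are comparatively minor and are handled in the paper by Lemma \ref{lem-pass-to-group} and the explicit determinant computation, but without the two mechanisms above the claimed inequality \eqref{eq-claimed-inequality} is out of reach.
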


The family $S_\de$ does not invalidate the strategy in the example (iii) above of proving the determinant approximation conjecture, because the supports of $S_\de$ grow when $\de\to 0$. However, in view of how we construct $S_\de$, we state the following conjecture.

We say a function $g\colon \R_+\to \R_+$ is \textit{computable} if there is an algorithm which given $\eps\in \Q$ computes $g(\eps)$.

\begin{conjecture}\label{conj-my}
For every continuous computable function $g\colon \R_+\to \R_+$ such that $g(\eps)\to 0$ when $\eps\to 0$ there exists a group $G$ and $S\in \Z[G]$ such that 
$$
\mu_S((0,\eps_i)) > \frac{g(\eps_i)}{|\log(\eps_i)|}
$$
for some sequence of positive $\eps_i$ converging to $0$.
\end{conjecture}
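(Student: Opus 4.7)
The plan is to construct $G_\de$ as a wreath product of the form $A \wr H$, with $A$ a finite abelian group (the simplest choice being $A = \Z/2\Z$) and $H$ a carefully chosen finitely generated group, together with a self-adjoint element $S_\de \in \Z[G_\de]$ built from a symmetric generating set of $H$ and the canonical lamp-switching element at the identity. The standard first step is to exploit Pontryagin duality on the abelian lamp base $A^H$ to decompose the spectral measure as
\begin{equation}
\mu_{S_\de} \;=\; \int_{\widehat{A^H}} \mu_{S_\om}\, dP(\om),
\end{equation}
where $P$ is the normalized Haar measure on $\widehat{A^H}$ and $S_\om$ is a Jacobi-type operator on $\ell^2(H)$ whose coefficients are determined by $\om$. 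Under this reduction, the density $\mu_{S_\de}((0,\eps))$ equals the $P$-probability that $S_\om$ has an eigenvalue in $(0,\eps)$.

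The next step is to tune $(H, S_\de)$ so that the event ``$S_\om$ has an eigenvalue in $(0,\eps)$'' is implied by the presence in $\om$ of a specific finite combinatorial pattern, of ``size'' $N(\eps)$ with $N(\eps)\asymp|\log\eps|^{1+\de}$. If a random $\om$ contains such a pattern with probability at least $C/|\log\eps|^{1+\de}$ for infinitely many scales $\eps$, then \eqref{eq-claimed-inequality} follows along that subsequence. This reduction is conceptually clean; the work lies in producing an $H$ and an $S_\de$ for which the combinatorial pattern actually forces a small eigenvalue of $S_\om$ of roughly the right magnitude, and for which ``containing the pattern'' has the right measure.

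The main obstacle is that naive choices do not give enough favourable configurations. For $H = \Z$ and $A = \Z/2\Z$ the natural patterns are constant blocks of length $n$ in a Bernoulli configuration, which occur with probability $2^{-n}$; taking $n \asymp |\log\eps|$ yields density polynomial in $\eps$, fully consistent with Conjecture \ref{conj-lott-lueck}. To slow the decay to $1/|\log\eps|^{1+\de}$, the probability that a random configuration is favourable at scale $n$ must itself be only polynomially small in $n$. I would engineer this by replacing the Bernoulli base with one of much lower effective entropy, for instance by passing to an iterated tower $A \wr (A \wr (\cdots \wr H))$ whose depth grows with $1/\de$, or by choosing $H$ whose geometry constrains the favourable patterns to a very sparse set of sites of sub-exponential growth. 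Matching the exponent to $1+\de$ exactly, and keeping $S_\de$ finitely supported so that $S_\de \in \Z[G_\de]$, is the technical crux of the argument; the rest is bookkeeping that converts a block length of $\asymp|\log\eps|^{1+\de}$ into a spectral gap of size $\eps$ via elementary Jacobi-matrix estimates.
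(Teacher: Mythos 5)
There is a genuine gap, and it starts with what the statement actually is: you were asked about Conjecture \ref{conj-my}, which the paper does \emph{not} prove --- it is left open, and the paper explains the obstruction (the ``interference'' problem for longer computational chains). Theorem \ref{thm-main} only establishes the conjecture for $g(\eps)=|\log(\eps)|^{-\de}$. Your sketch, even if every step were completed, targets exactly that weaker bound $\mu_{S_\de}((0,\eps))\gtrsim |\log(\eps)|^{-(1+\de)}$; it never addresses an arbitrary continuous computable $g\to 0$, so it cannot prove the statement as given. At best it is an outline of an alternative route to Theorem \ref{thm-main}.

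Even as such an outline, two essential points are missing or wrong. First, the reduction ``$\mu_{S}((0,\eps))$ equals the $P$-probability that $S_\om$ has an eigenvalue in $(0,\eps)$'' is not correct: the spectral density is the von Neumann trace of a spectral projection, i.e.\ an \emph{expected normalized eigenvalue count}, and the normalization (the factor $1/|\cal G(x)|$ in the paper's formula for $\mu^2_U$) costs an extra power of $|\log(\eps)|$ that your bookkeeping does not track (this is precisely why the paper gets exponent $d+1$ rather than $d$). Second, and more seriously, your central quantitative requirement is unachievable in the form you state it: with a Haar/Bernoulli base, any pattern constraining $N(\eps)\asymp|\log(\eps)|^{1+\de}$ coordinates has probability exponentially small in $N(\eps)$, not $\gtrsim |\log(\eps)|^{-(1+\de)}$, and your proposed remedies (iterated wreath towers of depth growing with $1/\de$, or base groups of sub-exponential growth to ``lower effective entropy'') are left entirely unconstructed. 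The paper's resolution is different and is the whole content of Sections \ref{sec-long-chains-suffice}--\ref{sec-tds}: keep the product base, but use a Turing dynamical system (the ``carry'' algorithm) whose seed pattern of size $j$ generates a computational chain of length $\ell\ge(\D+1)^j$; the small eigenvalue is exponentially small in $\ell$ (Lemma \ref{lem-small-eigen}), while the favourable set is the union of the $\sim\ell$ disjoint translates of the exponentially small seed along the chain, giving measure $\approx \ell^{-d}$ with $d\to 0$ as $\D\to\infty$. In addition one must verify that distinct chains do not interfere (conditions (B) and (C), simple-connectedness of the components), which is what guarantees that the fibre operators really are the finite line operators $S_m$; this verification is the case-by-case analysis of Proposition \ref{prop-is-good} and is exactly the ``technical crux'' your proposal defers.
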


 Informally, Conjecture \ref{conj-my} claims that the best known general bound \eqref{eq-schick-bound} is optimal. We discuss some arguments in favour at the end of the introduction. Theorem \ref{thm-main} establishes Conjecture \ref{conj-my} for $g(\eps) = \frac{1}{|\log(\eps)|^\de}$, for all $\de>0$. 

\begin{remarks} \label{rem-4}(i) The groups $G_\de$  in Theorem \ref{thm-main} are of the form $[\oplus_\Z \ZZ_2^N \rtimes (\aut(\ZZ_2^N)\wr \ZZ)] \times (\ZZ_2^2\rtimes \aut(\ZZ_2^2))$, where $N$ depends on $\de$. The elements $S_\de$ can also be written down explicitly. 

In the statement of Theorem \ref{thm-main}, the phrase "for some sequence of positive $\eps_i$ converging to $0$" can be replaced by "for all sufficiently small $\eps$". We discuss it in Remark \ref{rem-suff-small}.

 (ii) Another way of phrasing Conjecture \ref{conj-lott-lueck}  is that the so-called \textit{Novikov-Shubin invariants} are always positive. All group ring elements $S_\de$ in Theorem \ref{thm-main} have the Novikov-Shubin invariant equal to $0$. For more information and context we refer to \cite[Chapter 2]{Lueck:Big_book}. Let us note in passing that the only groups $G$ such that the conjecture is known for all $T\in \Q[G]$, are the virtually abelian (see \cite{2013arXiv1310.8564L} and references there) and virtually free groups (see \cite{Sauer(2003)}). 
 
It is interesting to note the contrast between the status of Conjecture \ref{conj-lott-lueck} and the status of the question about the rationality of $l^2$-Betti numbers, i.e.~the \textit{Atiyah conjecture}. Although recently (\cite{arxiv:austin-2009}, \cite{arxiv:grabowski-2010} and \cite{arxiv:pichot_schick_zuk-2010}, \cite{arxiv:lehner_wagner-2010}, \cite{arXiv:grabowski-2010-2}) examples of irrational $l^2$-Betti have been found, it still is very plausible that the Atiyah conjecture holds when the fundamental group is torsion-free. This is not the case with Conjecture \ref{conj-lott-lueck}. Although the groups $G_\de$ in Theorem \ref{thm-main} are not torsion-free, in the appendix we describe a previously unpublished observation of the author and B. Vir\'ag: counterexamples to Conjecture \ref{conj-lott-lueck} with a torsion-free $G$ can be deduced from the mathematical physics literature.

(iii) In \cite{MR2178069} it is shown that the proof of \eqref{eq-schick-bound} from \cite{MR1828605} works for $T\in \Q[G]$, where $G$ is an arbitrary \textit{sofic} group\footnote{We warn the reader that, because of a typo, the statement of \cite[Proposition 6.1(b)]{MR2178069} is false. The correct statement is with $\limsup$ on the left-hand side, and ``$\le$'' instead of ``$=$''.}. We do not discuss sofic groups here, see e.g.~\cite{Pestov_Hyperlinear_and_sofic_groups_a_brief_guide}.

The bound \eqref{eq-schick-bound} is the best known general bound in the following sense. If $G$ is a finitely generated group which is not virtually abelian or virtually free then it is not known if there exists $f$  such that $f(\eps)=o\left(\frac{1}{|\log(\eps)|}\right)$ and such that for every $T\in \Q[G]$ there exists $C_T>0$ such that $\mu_T((0,\eps))<C_T f(\eps)$.

The situation is much worse when $G$ is not assumed to be sofic. Then it is not known if for every $T\in \Q[G]$ there exists a computable function $f_T$ whose values converge to $0$ when $\eps\to 0$ and such that $\mu_T((0,\eps)) \le f_T(\eps)$ for sufficiently small $\eps$. (in particular it is not known if \eqref{eq-schick-bound} holds).

(iv) Even less is known when we consider the real group ring instead of the rational one. If $G$ is not virtually abelian or virtually free, then it is not known if for every $T\in \R[G]$ there exists a computable function $f_T$ whose values converge to $0$ when $\eps\to 0$ and such that $\mu_T((0,\eps)) \le f_T(\eps)$ for sufficiently small $\eps$. The only general result known to the author is from \cite{thom_sofic_groups_and_diophantine_approximation}: the bound $\mu_T((0,\eps)) \le \frac{C}{\sqrt{|\log{\eps}|}}$ holds for $T-\al\in \R[G]$ where $T\in \Q[G]$, $\al\in \R$,  and $G$ is a sofic group. Both here and in the previous remark $\Q$ can be replaced everywhere by the field of algebraic numbers.

(v)  Conjecture \ref{conj-my} does not \textit{contradict} the determinant approximation conjecture, only invalidates the proof strategy mentioned in the example (iii). Furthermore, Li and Thom \cite{MR3110799} establish the determinant approximation in a different setting of compressions  along a F{\o}lner sequence. They do not use any a priori bounds on the spectral density. In fact their approximation result works for an arbitrary element of the group von Neumann algebra, and such elements can have arbitrarily large spectral density around $0$.
\end{remarks} 
\vspace{-5pt}\textbf{Overview of the proof of Theorem \ref{thm-main}.} In the next section we  recall some basics on spectral measures and explain our computational tool. Variants of it were used in the context of computing spectral measures e.g.~in \cite{MR1436310}, \cite{Dicks_Schick}, \cite{Lehner_Neuhauser_Woess:On_the_spectrum_of},  \cite{arxiv:austin-2009}, and \cite{arxiv:pichot_schick_zuk-2010}. For proofs we refer to \cite[Section 2]{arxiv:grabowski-2010}, where a very general version is presented.  For more details on the spectral measures and von Neumann algebras see e.g.~\cite[Chapters 1 and 2]{Lueck:Big_book}, or the extremely readable textbook \cite{Reed_Simon_Methods_of_moder_mathematical_warfare_I}.

In Section \ref{sec-long-chains-suffice} we explain how to deduce Theorem \ref{thm-main} from the existence of a  measure-preserving action $\Ga\actson X$ of a discrete group $\Ga$ on a probability measure space $X$ with certain properties. The most important property is as follows. There are finitely many subsets $X_1,X_2,\ldots, X_n$ of $X$ together with elements $\ga_1,\ldots \ga_n$, such that the following holds. Let $\cal G$ be the graph whose set of vertices is $X$ and with an edge between $x$ and $y$ iff for some $i$ we have $x\in X_i$ and $\ga_i.x=y$. Then there is $d>0$ and a sequence of natural numbers $l_1,l_2,\ldots$, such that the probability that the connected component of $x\in X$ is a line of length at least $l_i$ is more than $\frac{C}{l_i^d}$. The closer $d$ is to $0$, the closer $\de$ in Theorem \ref{thm-main} is to $0$.

In Section \ref{sec-tds} we construct the actions $\Ga\actson X$ with necessary properties.  This is done using the framework of \textit{Turing dynamical systems} introduced in \cite{arxiv:grabowski-2010}. The sets $X_i$ are interpreted as states of a Turing machine and $\ga_i$ as instructions corresponding to a given state. The point is to find a Turing dynamical system whose computations are long, and such that "different computational paths do not interfere with each other", so that the connected components of $\cal G$ above are indeed lines and not trees.

There are no problems in finding Turing dynamical systems with very long computational paths. In fact long enough to prove Conjecture \ref{conj-my}, \textit{if not for the interference condition}. To make sure that different computational paths do not interfere, we use a very specific system which imitates the standard "carry" algorithm of adding numbers. It is very explicit and this allows for checking the interference condition.

However, it seems to the author that the interference condition is more a technical problem than a fundamental obstacle, and this is the reason why we pose Conjecture \ref{conj-my}.

\section{Preliminaries on spectral measures}\label{sec-prelim}

If $D$ is a countable set then $l^2(D)$ is the Hilbert space of all functions $f\colon  D\to \C$ such that $\sum_{d\in D} f(d)\ov{f(d)} <\infty$. The indicator function of $d\in D$ is denoted by $\zeta_d$. If $G$ is a graph whose set of vertices is $V$ then $l^2(G):= l^2(V)$. 

We need to consider Hilbert spaces which are \textit{direct integrals} of other Hilbert spaces - see e.g.~\cite[Subsection 7.4]{Folland:A_course_in_abstract_harmonic_analysis}.

The group ring $\R[G]$ of a group $G$ is the set of formal linear combinations $\sum a_g\cdot g$ where $a_g\in \R$, $g\in G$, and all but finitely many $a_g$ are $0$, together with the obvious addition and multiplication operations. Similarly for $\Z[G]$, $\Q[G]$ and $\C[G]$.

Let $(X,\mu)$ be a probability measure space and let $\Ga\actson X$ be an action of a discrete group by measure-preserving transformations. The result of the action of $\ga\in \Ga$ on $x\in X$ is denoted by $\ga.x$. Consider the Hilbert space $\cal H$ defined as the direct integral
\beq\label{eq-H-decomposes}
\int_X^\oplus l^2(\Ga)\,d\mu(x).
\eeq
Since we will need to specify different fibers in $\cal H$, we denote the copy of $\Ga$ corresponding to $x\in X$ by $\Ga_x$. From now on we will suppress both $X$ and $d\mu(x)$ from all the integrals in this section. As such we could write $\cal H = \int^\oplus l^2(\Ga_x)$.

We have actions of $\Ga$ and of $L^\infty(X)$ on $\cal H$ defined as follows. Both $\Ga$ and $L^\infty(X)$ act in a fiber-preserving fashion. For $\ga\in \Ga$ and $\al\in \Ga_x$ we define $\ga.\zeta_\al := \zeta_{\ga\al}$ and for $f\in L^\infty(X)$ we define $f.\zeta_\al := f(\al.x)\zeta_\al$. The weak completion of the algebra of bounded operators on $\cal H$  generated by $\Ga$ and $L^\infty(X)$ is denoted by $\Ga\ltimes L^\infty (X)$ and is called the \textit{group-measure space construction}. It is an example of a  \textit{von Neumann algebra}. It is equipped with a ${}^\ast$-operation which fulfils
$$
\langle T\zeta_1,\zeta_2\rangle = \ov{\langle \zeta_1,T^\ast \zeta_2\rangle},
$$
for all $\zeta_1,\zeta_2\in \cal H$ and $T\in \Ga\ltimes L^\infty (X)$. If $T=T^\ast$ we say $T$ is \textit{self-adjoint}.

Every $T\in \Ga\ltimes L^\infty (X)$ preserves the fibers of $\cal H$. In other words there is a family $T(x)$, $x\in X$, of operators on $l^2(\Ga_x)$ such that 
\beq\label{eq-T-decomposes}
T = \int_X^\oplus T(x).
\eeq
The \textit{von Neumann trace} of $T$ is defined by $\tau(T):= \int\langle T\zeta_x,\zeta_x\rangle$. If $T$ is a self-adjoint operator on an arbitrary Hilbert space, one associates to it a \textit{projection-valued spectral measure}, i.e.~a function $\pi_T$ from measurable subsets of $\R$ to the projections on the Hilbert space, which has a suitable $\si$-additivity property. If $T\in \Ga\ltimes L^\infty (X)$, the \textit{spectral measure} of $T$ is the measure $\mu_T$ on $\R$ which is equal to the composition  $\tau\circ\pi_T$. It makes sense to compose $\tau$ with $\pi_T$ because for $T\in \Ga\ltimes L^\infty (X)$ and a measurable $D\subset \R$ we  have $\pi_T(D)\in \Ga\ltimes L^\infty (X)$.

A particular case of the group-measure space construction is when $X$ consists of a single point, and $\Ga\actson X$ is the only possible action. The resulting group-measure space construction is the \textit{group von Neumann algebra of $\Ga$} which is denoted by $L(\Ga)$.

We now focus on the situation when $(X,\mu)$ is a compact abelian group with the normalized Haar measure and the action $\Ga\actson X$ is by continuous group automorphisms . Let $A$ be the Pontryagin dual of $X$ (see \cite{Folland:A_course_in_abstract_harmonic_analysis} for more information on the Pontryagin duality). In particular $A$ is a discrete abelian group. We have an embedding 
\beq\label{eq-pontryagin-emb}
\C[A] \to L^\infty(X)
\eeq
 given by the Pontryagin duality. The preimage of $f\in L^\infty(X)$ under this embedding, if it exists, is denoted by $\wh f$. 

We have the \textit{dual action} of $\Ga$ on $A$ which is the unique action which makes the embedding \eqref{eq-pontryagin-emb} equivariant. As such we obtain an embedding
\beq\label{eq-pontryagin-emb-ring}
\C[\Ga\ltimes A] \to \Ga\ltimes L^\infty (X),
\eeq
We have potentially two ways of computing the spectral measure of $T\in \C[\Ga\ltimes A]$. Either as an element of the group von Neumann algebra $L(\Ga\ltimes A)$ or, via the above embedding, as an element of the von Neumann algebra $\Ga\ltimes L^\infty (X)$. 
 
 \begin{lemma}[e.g.~Proposition 2.5 in \cite{arxiv:grabowski-2010}]\label{lem-pass-to-group}
 The spectral measure of $T\in \C[\Ga\ltimes A]$ is the same in $L(\Ga\ltimes A)$ and in $\Ga \ltimes L^\infty (X)$.\qed    
 \end{lemma}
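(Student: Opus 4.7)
\textbf{Proof plan for Lemma \ref{lem-pass-to-group}.}

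The plan is to reduce the equality of spectral measures to an equality of traces. Both ambient algebras, $L(\Ga\ltimes A)$ and $\Ga\ltimes L^\infty(X)$, are finite von Neumann algebras equipped with normalized faithful normal traces (call them $\tau_1$ and $\tau_2$ respectively); and for any bounded self-adjoint element $T$ of such an algebra the spectral measure $\mu_T$ is the unique Borel probability measure on $[-\|T\|,\|T\|]$ whose $n$-th moment is $\tau(T^n)$. The embedding \eqref{eq-pontryagin-emb-ring} is a unital ${}^\ast$-algebra homomorphism, so once we know that $\tau_1(T)=\tau_2(T)$ for every $T\in \C[\Ga\ltimes A]$, we immediately get $\tau_1(T^n)=\tau_2(T^n)$ for all $n$, and therefore equality of the two spectral measures.

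To prove the trace identity, I would reduce to a basis element $(\ga,a)\in \Ga\ltimes A$ by linearity. On the $L(\Ga\ltimes A)$ side, $\tau_1(\ga,a) = \delta_{\ga,e}\,\delta_{a,1_A}$ by definition of the canonical trace on a group von Neumann algebra. On the $\Ga\ltimes L^\infty(X)$ side, the image of $(\ga,a)$ under \eqref{eq-pontryagin-emb-ring} is the operator $\ga\cdot\chi_a$ where $\chi_a\in L^\infty(X)$ is the character dual to $a$. Using the explicit action on the fibers from the preliminaries, one computes
\[
(\ga\cdot\chi_a)(x)\,\zeta_e \;=\; \chi_a(x)\,\zeta_\ga \quad \text{in } l^2(\Ga_x),
\]
so that $\langle(\ga\cdot\chi_a)(x)\,\zeta_e,\zeta_e\rangle = \chi_a(x)$ when $\ga=e$ and is $0$ otherwise. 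Integrating over $X$ gives
\[
\tau_2(\ga\cdot\chi_a) \;=\; \delta_{\ga,e}\int_X \chi_a\,d\mu \;=\; \delta_{\ga,e}\,\delta_{a,1_A},
\]
where the last equality is the standard orthogonality of characters of a compact abelian group against the constant character. This matches $\tau_1(\ga,a)$ and completes the argument.

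There is no real obstacle here: the work is mostly bookkeeping to unwind the group-measure construction on a basis element. The only conceptual point to be careful about is that the embedding \eqref{eq-pontryagin-emb-ring} is indeed a ${}^\ast$-algebra homomorphism that sends the canonical generators $(\ga,a)$ to $\ga\cdot \chi_a$; this is forced by the equivariance clause in the definition of the dual action (which is precisely what makes \eqref{eq-pontryagin-emb} $\Ga$-equivariant) together with the fact that Pontryagin duality identifies the group ring generators $a\in A$ with the characters $\chi_a$. Once this is recorded, the moment-matching argument above yields the lemma.
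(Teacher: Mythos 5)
Your proposal is correct. Note that the paper does not prove this lemma at all: it is quoted from Proposition 2.5 of \cite{arxiv:grabowski-2010}, where the argument runs by observing that Pontryagin--Fourier duality gives a trace-preserving identification of $L(A)$ with $L^\infty(X)$, which extends to a trace-preserving ${}^\ast$-embedding $L(\Ga\ltimes A)\into \Ga\ltimes L^\infty(X)$; since the spectral projections of a self-adjoint $T$ lie in the von Neumann subalgebra it generates, and $\mu_T(D)=\tau(\pi_T(D))$, equality of traces on the subalgebra yields equality of spectral measures. Your moment-matching version is essentially the same computation packaged differently: you verify the trace identity on the group-ring basis elements $(\ga,a)\mapsto \ga\chi_a$ (your calculation $\tau_2(\ga\chi_a)=\delta_{\ga,e}\delta_{a,1_A}$ is exactly the orthogonality-of-characters step underlying the trace-preserving identification), and then conclude via the standard fact that a compactly supported measure is determined by its moments $\tau(T^n)$. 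This buys you a small simplification: you never have to discuss spectral projections or normality of the embedding, only the algebraic identity $\tau_1(T^n)=\tau_2(T^n)$ for $T^n\in\C[\Ga\ltimes A]$, which is legitimate because the embedding \eqref{eq-pontryagin-emb-ring} is a unital ${}^\ast$-ring homomorphism. No gaps.
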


We will produce the groups $G_\de$ and elements $S_\de\in \Z[G_\de]$ in Theorem \ref{thm-main} using the above lemma. The groups will be constructed as $\Ga\ltimes A$ for suitable $A$ and $\Ga$, but the computations showing the desired spectral properties will be carried out in $\Ga\ltimes L^\infty (X)$.

Let us now explain why the spectral computations in $\Ga\ltimes L^\infty(X)$ are sometimes easy to perform. The point is to understand the decomposition \eqref{eq-T-decomposes}. Let $T\in \Ga\ltimes L^\infty (X)$ be given as a finite sum $\sum_{i=1}^n \ga_i\chi_i$, where $\ga_i\in \Ga$ and $\chi_i\in L^\infty (X)$ are indicator functions of some subsets $X_i\subset X$.

Consider the oriented graph $\wt{\cal G}$ associated to $T$ as follows. The set of vertices is $\Ga\times X$ and there is an edge from $(\ga_1,x_1)$ to $(\ga_2,x_2)$  if for some  $i=1,\ldots, n$ we have $\ga_1.x_1\in X_i$ and $\ga_i\ga_1.x_1=\ga_2.x_2$. 

Let $\wt{\cal G}(x)$ be the connected component of $(e,x)$ in  $\wt{\cal G}$. Let $\wt{T}(x)\colon l^2(\wt{\cal G}(x)) \to l^2(\wt{\cal G}(x))$ be the oriented adjacency operator on $\wt{\cal G}(x)$ and for $i=1,\ldots,n$, define 
$$\wt\chi_i(x)\colon  l^2(\wt{\cal G}(x)) \to l^2(\wt{\cal G}(x))
$$
 by $\wt\chi_i(x)\zeta_{(\ga,y)} := \zeta_{(\ga,y)}$ if $\ga.y\in X_i$ and $\wt\chi_i(x)\zeta_{(\ga,y)} :=0$ otherwise. 

Consider now the oriented graph $\cal G_\Ga$ with edge labels from the set $\{\ga_1,\ldots, \ga_n\}$ defined as follows. The set of vertices is $X$, and there is an edge from $x_1$ to $x_2$ with label $\ga$ if for some $i=1,\ldots, n$ we have $\ga=\ga_i$, $x_1\in X_i$ and $\ga_i.x_1=x_2$. Note that there might be multiple edges between any two points of $X$.

Let $\cal G_\Ga(x)$ be the connected component of $x$ in $\cal G_\Ga$. We say $\cal G_\Ga(x)$ is \textit{simply-connected} if multiplying edge-labels along any closed loop gives the trivial element of $\Ga$ (if a loop traverses an edge in the direction opposite to the orientation of the edge, we invert the label). Note that in a simply-connected $\cal G_\Ga(x)$ there are no multiple edges, and self-loops are labelled with the neutral element of $\Ga$.

Finally, let $\cal G(x)$ be the graph which arises from $\cal G_\Ga(x)$ in the following way. If there is an edge from $x_1$ and $x_2$  with label $\ga$ then we replace it by an edge with label equal to $\#\{i\colon x_1\in X_i \text{ and } \ga_i=\ga\}\in \N$.  Let $T(x) \colon l^2(\cal G(x)) \to l^2(\cal G(x))$ be the oriented and labelled adjacency operator on $\cal G(x)$, i.e. the unique operator such that $\langle T(x)\zeta_x,\zeta_y\rangle$ is equal to the sum of labels on edges from $x$ to $y$. Let us  define $\chi_i(x)\colon l^2(\cal G(x)) \to l^2(\cal G(x))$ by $\chi_i(x)\zeta_y := \zeta_y$ if $y\in X_i$ and $\chi_i(x)\zeta_y:=0$ otherwise. 

Let $X^{sf}$ be the subset of $X$ consisting of those $x$ for which  $\cal G_\Ga(x)$ is simply-connected and finite. 

Let $\cal K$ be the Hilbert space
$$
\int^\oplus_{X\setminus X^{sf}} l^2(\wt{\cal G}(x))\oplus \int^\oplus_{X^{sf}} l^2(\cal G(x)).
$$

We have a ${}^\ast$-embedding of  the von Neumann subalgebra $L(T)$ generated by $T, \chi_1,\ldots, \chi_n\in \Ga\ltimes L^\infty(X)$ into $\mathrm B(\cal K)$, induced by 
\begin{gather*}\label{eq-iso}
    T \mapsto \int^\oplus_{X\setminus X^{sf}} \wt T(x) \oplus \int^\oplus_{X^{sf}} T(x)\\ 
     \chi_i \mapsto \int^\oplus_{X\setminus X^{sf}} \wt \chi_i(x) \oplus \int^\oplus_{X^{sf}} \chi_i(x)
\end{gather*}

Under this embedding, $U\in L(T)$ decomposes and we denote the resulting decomposition by
$$
\int^\oplus_{X\setminus X^{sf}} \wt U(x) \oplus \int^\oplus_{X^{sf}} U(x).
$$
A direct computation shows
$$
\tau(U) = \int_{X\setminus X^{sf}} \langle \,\wt U(x)\, \zeta_{(e,x)}, \zeta_{(e,x)}\rangle + \int_{X^{sf}}  \frac{1}{|\cal G(x)|} \tr(U(x)),
$$
where $\tr(U(x)) = \sum_{v\in \cal  G(x)} \langle U(x)\zeta_v,\zeta_v\rangle\,d\mu(x) $ is the standard trace of a finite-dimensional endomorphism.
Accordingly, for self-adjoint $U$ the spectral measure $\mu_U$ decomposes as 
\beq\label{eq-mu3}
\mu_U = \mu^1_U + \mu^2_U.
\eeq 
The measure $\mu^2_U(D)$ of a measurable subset $D\subset \R$ is given by
$$
\mu^2_U(D) = \int_{X^{sf}}   \frac{1}{|\cal G(x)|} \mu_{U(x)}(D) \, d\mu(x).
$$
Note that $\mu_{U(x)}$ is the spectral measure of a \textit{finite-dimensional} endomorphism $U(x)$, i.e.~it is simply the set of eigenvalues of $U(x)$ with multiplicities. This makes 
$\mu^2_U$ relatively easy to compute. The operators $S_\de$ we will construct to prove Theorem \ref{thm-main} will be such that already $\mu^2_{S_\de}$ fulfils the claimed inequality \eqref{eq-claimed-inequality}.

\section{Turing dynamical systems with long computational chains}\label{sec-long-chains-suffice}

It is convenient to construct our group ring elements using  \textit{Turing dynamical systems} introduced in \cite{arxiv:grabowski-2010}, so we start by adapting some definitions from \cite[Section 4]{arxiv:grabowski-2010}. All sets and subsets are measurable whenever it makes sense (and the checks that the sets we define are measurable are straightforward).

Let $(X, \mu)$ be a probability measure space and $\Ga\actson X$ be a measure-preserving action of a countable discrete group $\Ga$.  The result of the action of $\ga\in \Ga$ on $x\in X$ is denoted by $\ga.x$. A {\it dynamical hardware} is the following data: $(X,\mu)$, the action $\Ga\actson X$, and a division $X = \bigsqcup_{i=1}^n X_i$ of $X$ into disjoint measurable subsets. For brevity, we denote such a dynamical hardware by $(X)$.

Suppose we are given a dynamical hardware $(X)$ and we choose three additional distinguished disjoint subsets of $X$, each of which is a union of certain $X_i$: the initial set $I$, the rejecting set $R$, and the accepting set $A$ (all or some of them might be empty). Furthermore, suppose that for every set
$X_i$, we choose one element $\ga_i$ of the group $\Ga$ in such a way that 

(i) the elements corresponding to the sets $X_i$ which are subsets of $R\cup A$ are equal to the neutral element $e$ of $\Ga$, 

(ii) if $\ga_i\neq e$ then $\mu(\{x\in X_i\colon \ga_i.x= x\})=0$. 

A {\it dynamical software} for a given dynamical hardware $(X)$  consists of  the distinguished sets $I, A$ and $R$, and the choice of elements $\ga_i$, subject to the conditions above. 
Each pair $(\ga_i, X_i)$ is referred to as an \textit{instruction}. The set $\ga_i.X_i$ is referred to as the \textit{resulting set} of the instruction $(\ga_i,X_i)$.

Define $T_X: X\to X$ by putting $T_X(x):= \ga_i.x$ for $x\in X_i$. A {\it Turing dynamical system} is a dynamical hardware $(X)$ together with a dynamical software for $(X)$.  We will denote a Turing dynamical system as just defined by $(X, T_X)$.

A \textit{computational chain} is a sequence $x, T_X(x), T^2_X(x),\ldots, T^{l-1}_X(x)$ such that $x\in I$,  $T^{l-1}_X(x)\in A\cup R$, and such that  $l-1$ is the smallest integer for which $T^{l-1}_X(x)\in A\cup R$. Note that it follows that $T^{l}_X(x)=T^{l-1}_X(x)$. The \textit{length} of such a chain is $l$.

Our task is to construct a Turing dynamical system for which there is a subset $Y\subset X$ which is a union of disjoint  computational chains, and such that
\begin{enumerate}[(A)]
\item there exist $d>0$, $C>0$, and infinitely many natural numbers $l$ such that the measure of the set of those $x\in Y$ which are in  computational chains of length at least $l$ is more than $\frac{C}{l^d}$ 
\item $T_X(X\setminus Y)\subset X\setminus Y$.
\item $I\cap T_X(X) =\emptyset$.
\end{enumerate}

We will construct such Turing dynamical systems in Section \ref{sec-tds} (with arbitrarily small $d$). In this section we explain how $(X,T_X)$ gives rise to an operator in the von Neumann algebra $\Ga\ltimes L^\infty(X)$ whose spectral measure is large around $0$.

Let $\chi_i\in L^\infty(X)$ be the indicator function of $X_i$ and let $\chi_I$ be the indicator function of $I$.
\begin{theorem}\label{thm-good-implies-main} Let $(X, T_X)$ be a Turing dynamical system as above which fulfils (A), (B) and (C). Consider the operator 
$$
    T:= \sum_{(\ga_i,X_i)\in P}  \ga_i \chi_i \in \Ga \ltimes L^\infty(X), 
$$
where $P$ is the set of those instructions whose group element is not $e$. Let 
$$
S := 5+ 2(T+T^*) -4\chi_I\in \Ga\ltimes L^\infty(X).
$$
Then there is a constant $C'>0$ and a sequence $\eps_j>0$ converging to $0$ such that 
$$
\mu_S((0,\eps_j)) \ge \frac{C'}{|\log(\eps_j)|^{d+1}}.
$$
\end{theorem}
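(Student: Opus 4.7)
My plan has four steps: an algebraic factorization of $S$, an explicit finite-dimensional spectral estimate on a single chain, a summation via the $\mu^2_S$ formula, and a dyadic pigeonhole that converts property (A) into the claimed bound.

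\emph{Step 1 (Factorization on chains).} I first verify the operator identity
\beq
S = (1+2T)(1+2T^\ast)
\eeq
on the $Y$-part of the fibered Hilbert space. Expanding the right-hand side and matching with $S = 5 + 2(T+T^\ast) - 4\chi_I$ reduces the identity to the claim $TT^\ast = 1 - \chi_I$ on $Y$, which is precisely where property (C) enters: initial points have no $T_X$-preimage, so $TT^\ast$ vanishes on $I$, while the chain structure supplies every non-initial point of $Y$ with a unique $T$-predecessor. Properties (B), (C) and condition (ii) of the dynamical software also guarantee that each computational chain in $Y$ is a finite, simply-connected line in $\cal G_\Ga$, so $Y \subset X^{sf}$ and the $\mu^2_S$ formula of Section~\ref{sec-prelim} applies on $Y$.

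\emph{Step 2 (Eigenvalue bound on one chain).} On a chain of length $l'$ the operator $A := 1+2T_{l'}$ is lower triangular with $1$'s on the diagonal and $2$'s on the subdiagonal, hence invertible. Solving $A x = \zeta_{v_0}$ explicitly yields $x = \sum_{i=0}^{l'-1}(-2)^i \zeta_{v_i}$, so $\|A^{-1}\|^2 \ge \|x\|^2 = (4^{l'}-1)/3$, and therefore
\beq
\lambda_{\min}\bigl(S|_\text{chain}\bigr) \;\le\; \frac{3}{4^{l'}-1}.
\eeq

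\emph{Step 3 (Summation into $\mu^2_S$).} Set $\eps_l := 3/(4^l-1)$. By Step 2, each chain of length $l' \ge l$ contributes at least one eigenvalue in $(0,\eps_l)$, so
\beq
\mu_S((0,\eps_l)) \;\ge\; \mu^2_S((0,\eps_l)) \;\ge\; \sum_{l' \ge l} \frac{\mu(Z_{l'})}{l'} \;=\; \sum_{l' \ge l}\mu(I_{l'}) \;=:\; F(l),
\eeq
where $Z_{l'}$ is the union of chains of length $l'$ and $I_{l'} := I \cap Z_{l'}$. The trivial dyadic estimate $F(l) \ge (\mu(Y_l)-\mu(Y_{2l}))/(2l) =: \psi(l)/(2l)$ reduces the theorem to producing an unbounded sequence of $l$ with $\psi(l) \gtrsim 1/l^d$.

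\emph{Step 4 (Dyadic pigeonhole).} For each $l_0$ in the property-(A) sequence, telescoping gives
\beq
\sum_{k \ge 0}\psi(2^k l_0) \;=\; \mu(Y_{l_0}) \;\ge\; C/l_0^d,
\eeq
using that $\mu(Y_l)\to 0$ since chains are finite. Were $\psi(2^k l_0) < \tilde C/(l_0^d\, 2^{kd})$ to hold for \emph{every} $k \ge 0$, summing the geometric series would give $\sum_k\psi(2^k l_0) < \tilde C /(l_0^d(1-2^{-d}))$, contradicting the telescoping total as soon as $\tilde C < C(1-2^{-d})$. Fixing such $\tilde C$ and setting $l := 2^k l_0$ for a witness $k$ yields $F(l) \ge \tilde C/(2\, l^{d+1})$; running this for each $l_0$ in the property-(A) sequence produces an unbounded set of such $l$, and since $|\log \eps_l| = \Theta(l)$ the corresponding $\eps_l \to 0$ satisfy $\mu_S((0,\eps_l)) \ge C''/|\log \eps_l|^{d+1}$.

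Steps 1--3 are routine once the setup from Sections \ref{sec-prelim} and \ref{sec-long-chains-suffice} is unwound; the substantive point will be Step 4, where the extra power in the exponent $d+1$ (as opposed to $d$ in property (A)) naturally emerges from the dyadic averaging of the chain-length distribution.
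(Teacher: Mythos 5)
Your proof is correct, and it differs from the paper's in two substantive ways. First, for the small-eigenvalue bound the paper identifies $S(y)$ on a chain of length $m$ with the tridiagonal matrix $U_m$ (diagonal $1,5,\dots,5$, off-diagonals $2$) and proves Lemma \ref{lem-small-eigen} by computing $\det(U_m)=1$ and invoking Weyl interlacing for a rank-one perturbation, obtaining an eigenvalue below $5^{-m/3}$; your factorization $S=(1+2T)(1+2T^\ast)$ on the $Y$-part, which rests on $TT^\ast=1-\chi_I$ there (exactly where (B) and (C) enter), together with the explicit vector $\sum_i(-2)^i\zeta_{v_i}$, replaces that lemma entirely, gives the sharper bound $3/(4^{l'}-1)$, and incidentally explains why $\det(U_m)=1$. (A cosmetic point: for chains of length exactly $l$ your bound is a priori $\le\eps_l$ rather than $<\eps_l$; either note that $\|A^{-1}\|$ strictly exceeds the norm of its first column, or enlarge $\eps_l$ slightly.) Second, and more interestingly, your Step 4 addresses a point the paper glosses over: the paper deduces $\mu^2_S((0,D^l))\ge\frac{1}{l}\cdot\frac{C}{l^d}$ from (A) in one line, but since \eqref{eq-mu3} weights a chain of length $m$ by $1/m$, condition (A) alone (which controls the measure of points in chains of length \emph{at least} $l$) does not formally yield this when the mass sits on chains much longer than $l$; it does hold for the concrete systems of Section \ref{sec-tds}, where the relevant mass is carried by chains of length comparable to $l$, but your dyadic pigeonhole supplies the argument needed for the abstract statement, at the cost only of a constant and without changing the exponent $d+1$. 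So your route is both more self-contained (no separate linear-algebra section needed) and more careful on the passage from (A) to the spectral estimate.
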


\begin{proof}
Recall that $Y\subset X$ is a union of disjoint computational chains with properties (A), (B) and (C). The property (B) implies that for $y\in Y$ the connected component $\cal G(y)$ is the directed line:
$$
\bullet \to \bullet \to \ldots \to \bullet.
$$
In particular $Y\subset X^{sf}$. 

Furthermore (C) implies that only the first point of the chain is in the initial set $I$. It follows that $S(y)$ is the adjacency operator on the graph on Figure \ref{fig-1}, where the number of nodes is equal to the length of the computational chain of $y$. 
\begin{figure}[here]
\centering
\begin{tikzpicture}
  [scale=1.1,auto=center]
  \tikzset{edge/.style = {->,> = latex'}}
  \node (n1) at (1,1) {$\bullet$};
  \node (n2) at (3,1)  {$\bullet$};
  \node (n3) at (5,1)  {$\ldots$};
  \node (n4) at (7,1) {$\bullet$};

  \foreach \from/\to in {n1/n2,n2/n3,n3/n4}
    \draw[edge, bend right=23] (\from) to node[below] {$2$} (\to);
  \foreach \from/\to in {n1/n2,n2/n3,n3/n4}
    \draw[edge, bend right=23] (\to) to node[above] {$2$} (\from);
  \foreach \from/\to in {n2/n2,n4/n4}
    \draw[loop above,thick,edge] (\from) to node {$5$} (\to);
    \draw[loop above,thick,edge] (n1) to node {$1$} (n2);
\end{tikzpicture}
\caption{}\label{fig-1}
\end{figure}

Let the adjacency operator on the graph on Figure \ref{fig-1} with $m$ nodes be called $S_m$. The following lemma follows from a linear algebra calculation which we postpone to Section \ref{sec-lemma}.

\begin{lemma}\label{lem-small-eigen} 
There is $0< D<1$  such that for all $m\ge 2$ the adjacency operator $S_m$ has a positive eigenvalue smaller than $D^m$.\qed
\end{lemma}

The equality \eqref{eq-mu3} together with the assumption (A) imply now, for all $l\ge 2$ from the condition (A), 
$$
\mu_S^2((0,D^l)) \ge \frac{1}{l} \cdot \frac{C}{l^d}
$$

Putting $\eps =D^l$ we obtain $\mu_S((0,\eps)) \ge \mu_S^2((0,\eps))\ge \frac{C|\log(D)|^{d+1}}{|\log(\eps)|^{d+1}}$.

\end{proof}

\begin{remark}\label{rem-suff-small}
The proof shows that the sequence of $\eps_j$ in the statement of Theorem \ref{thm-good-implies-main} can be taken to be $\eps_j:=D^{l_j}$, where $D$ is a number in $(0,1)$ and $l_j$ are the lengths fulfilling the assumption (A). In the next section, we show that the sequence $l_j$ can be taken to be $l_j:= L^j$ for a suitable natural number $L$. It follows that for $\eps\in (\eps_{j+1}, \eps_j)$ we have 
\begin{multline*}
\mu((0,\eps))\ge \mu((0,\eps_{j+1})) \ge \frac{C'}{|\log(\eps_{j+1})|^{d+1}} = \frac{C'}{|\log(D^{l_{j+1}})|^{d+1}} =\\
= \frac{C'}{|\log(D^{L\cdot l_{j}})|^{d+1}} = \frac{\frac{C'}{L}}{|\log(\eps_j)|^{d+1}}\ge \frac{\frac{C'}{L}}{|\log(\eps)|^{d+1}}.
\end{multline*}
This shows that the inequality \eqref{eq-claimed-inequality} in Theorem \ref{thm-main} is true, after decreasing the constant, for all sufficiently small $\eps$.
\end{remark}

\section{The "carry" algorithm as a Turing dynamical system}\label{sec-tds}

Informally speaking, the Turing dynamical systems we construct imitate a Turing machine which given the input  $\0\,\0\,\0\,\ldots\,\0$  performs the traditional "carry" algorithm to add $1$ to $\0\,\0\,\0\,\ldots\,\0$ in the $(\D+1)$-ary system for some fixed $\D$, and keeps doing it until it reaches $\D\,\D\,\D\,\ldots\,\D$. The larger $\D$ we choose, the closer to $0$ the constant $d$ in property (A) will be.

There are no problems in obtaining the property (A). Small complications to the idea above arise because of (B) and (C).

Let $N$ be a natural number and let $\MM :=\ZZ_2^N$,  $\SS := \ZZ_2^2$.  They should be thought of as respectively the alphabet and the set of states of a Turing machine. Let us  denote the non-zero elements of $\SS$ by 
\textsc{inc-last-digit}, \textsc{carry}, 
and \textsc{zero-prev-digits}. The element $0\in\SS$ will be called \textsc{void}. 

The element $0\in\MM$ will be denoted by $\-$ (the "empty" symbol), and other elements by $\0, \1,\2,\ldots, \D$ (the order does not matter). The symbol $\D$ is also used to denote the natural number $2^N-2$.

For all pairs of distinct elements $\si,\tau\in \SS\setminus\{\textsc{void}\}$ we choose an automorphism of $\SS$ denoted by $(\si{\to}\tau)\in \aut(\SS)$ which sends $\si$ to $\tau$. Similarly for all $x,y\in \textsc{M}\setminus\{\-\}$ we choose $(x{\to} y)\in \aut(\MM)$ which sends $x$ to $y$.

Let us introduce a notation for subsets  of  $\MM^\ZZ\times \SS$ by giving examples. The set of those $((m_i)_{i\in \Z},s))\in \MM^\ZZ\times \SS$ such that $m_0= \texttt{\large $\textrm x$}$, $m_1= \texttt{\large $\textrm y$}$, $s=\si$ is denoted by 
\begin{equation*}
[\u{\texttt{\large $\textrm x$}}\,\texttt{\large $\textrm y$}, \si].
\end{equation*}
A red symbol means that we allow everything but not that symbol, for example 
\begin{equation*}
 [\u{\texttt{\large $\textrm x$}}\,{\red\texttt{\large $\textrm y$}}, \si]
\end{equation*}
is the subset of $\MM^\ZZ\times \SS$ consisting of those $((m_i),s)$ such that $m_0=\texttt{\large $\textrm x$}, m_1\neq \texttt{\large $\textrm y$}$ and $s=\si$. Finally $\ast$ means that we allow any symbol, for example 
\begin{equation*}
 [\u{\ast}\,{\red\texttt{\large $\textrm x$}}, \si]
\end{equation*}
is the subset of $\MM^\ZZ\times \SS$ consisting of those $((m_i),s)$ such that $m_1\neq \texttt{\large $\textrm x$}$ and $s=\si$.

Concrete elements from the sets as above will be denoted with curly brackets, for example $(\u{\texttt{\large $\textrm x_0$}}\,\texttt{\large $\textrm x_1$}, \si)$ is an element of the set $[\u{\texttt{\large $\textrm x_0$}}\,\texttt{\large $\textrm x_1$}, \si]$.

For the rest of this section $X$ is the compact abelian group $\MM^\ZZ\times \SS$, and $\Ga = (\aut(\MM)\wr\ZZ) \times \aut(\SS)$. The action $\Ga\actson X$ is as follows. $\aut(\SS)$ acts on $\SS$ in the natural way, $\aut(\MM)$ acts on the copy of $\MM$ in the $0$-coordinate of $\MM^\ZZ$ in the natural way, and the generator $t\in\ZZ$ acts by shifting, i.e.~$t.(\u{\texttt{\large $\textrm x$}}\,\texttt{\large $\textrm y$}, \si) = ({\texttt{\large $\textrm x$}}\,\u{\texttt{\large $\textrm y$}}, \si)$.

We define a dynamical hardware $(X)$ by choosing the division $X=\bigsqcup X_i$ to consist of the cylinder sets
\beq\label{eq-cyl-sets}
 [\texttt{\large $\textrm x$} \, \u{\texttt{\large $\textrm y$}}\,\texttt{\large $\textrm z$}, \si],
\eeq
where $\texttt{\large $\textrm x$},\texttt{\large $\textrm y$},\texttt{\large $\textrm z$}\in \MM$, and $\si\in\SS$.

The dynamical software is defined by the following instructions. The sets below are not of the form \eqref{eq-cyl-sets}, but they are finite disjoint unions of sets of the form \eqref{eq-cyl-sets}, so they give rise to instructions in a natural way. The left hand side is the element of $\Ga$ which corresponds to the set on the right hand side.

\small{
\begin{align*}
(\0{\to}\1)&\quad   [\u\0\,\-, \s{inc-last-digit}]\tag{S1}\label{s1}\\
&\ldots\\
(\texttt{\large{D{-}1}}{\to}\texttt{\large{D}}) &\quad [\u{\texttt{\large{D{-}1}}}\,\-, \s{inc-last-digit}]\tag{S2}\label{s2}\\
(\s{inc-last-digit}{\to}{\s{carry}}) &\quad [ \u{\texttt{\large{D}}}\,\-, \s{inc-last-digit}]\tag{S3}\label{s3}\\   
t^{-1} &\quad [\u\D, \s{carry}]\tag{S4}\label{s4}\\
t(\texttt{\large{D{-}1}}{\to}\D)(\s{carry}{\to}\s{zero-prev-digits}) &\quad [\u{\texttt{\large{D{-}1}}}\,{\red\-}, \s{carry}]\tag{S5}\label{s5}\\
t(\texttt{\large{D{-}2}}{\to}\texttt{\large{D{-}1}})(\s{carry}{\to}\s{zero-prev-digits}) &\quad [\u{\texttt{\large{D{-}2}}}\,{\red\-}, \s{carry}]\tag{S6}\label{s6}\\
&\ldots\\
t(\0{\to}\1)(\s{carry}{\to}\s{zero-prev-digits}) &\quad [\u{\0}\,{\red\-}, \s{carry}]\tag{S8}\label{s8}\\
t(\D{\to}\0) &\quad [\u{\D}\,{\red\-}, \s{zero-prev-digits}]\tag{S9}\label{s10}\\
(\D{\to}\0)(\s{zero-prev-digits}{\to}\s{inc-last-digit}) &\quad [\u{\D}\,\-, \s{zero-prev-digits}]\tag{S10}\label{s11}.
\end{align*}}
\normalsize \hspace{-3pt}We will refer to the pairs above also as \textit{instructions}.

If a cylinder set as in \eqref{eq-cyl-sets} is not a subset of one of the sets above, its associated element of $\Ga$ is defined to be the neutral element.  Finally, we define the initial set $I$ to be 
$[\-\,\u{\D}, \s{zero-prev-digits}]$, the accepting set $A$ to be $[\u{\-}, \s{carry}]$, and the rejecting set $R$ to be the empty set.


We will now define a set $Y$ for which the properties (A), (B) and (C) hold. For $j=1,2,\ldots$ let 
\begin{multline*}
Y(j):= [\-\,\u\D\,\D^{j-1}\,\-,\textsc{zero-prev-digits}] \cup \\
\cup T_X([\-\,\u\D\,\D^{j-1}\,\-,\textsc{zero-prev-digits}])\cup \\
\cup T_X^2([\-\,\u\D\,\D^{j-1}\,\-,\textsc{zero-prev-digits}])\cup \ldots,
\end{multline*}
and let $Y:= \bigcup_j Y(j)$.

\begin{proposition}\label{prop-is-good}
The Turing dynamical system $(X,T_X)$ and the set $Y$ just defined fulfil the properties (A), (B) and (C). Furthermore, as $\D$ grows to infinity, we can take $d$ in (A) converging to $0$.
\end{proposition}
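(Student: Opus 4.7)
\emph{Plan of proof.} The strategy is to trace the dynamics of $T_X$ on the forward orbit of $C_j := [\-\,\u\D\,\D^{j-1}\,\-, \textsc{zero-prev-digits}]$ explicitly, compute the length $L(j)$ of the resulting computational chain, and verify the three properties by a case analysis of the instructions (S1)--(S10). Starting at a point of $C_j$, the system first undergoes a \emph{zeroing phase} of $j$ steps --- (S9) fires $j-1$ times (each time shifting the head right and replacing a $\D$ by $\0$), then (S10) fires once --- after which the block $\D^j$ has become $\0^j$ and the head sits at position $j-1$ in state \textsc{inc-last-digit}. There follow $(\D+1)^{j-1} - 1$ \emph{super-cycles}, each of duration between $\D + 4$ and $\D + 2j$ steps; a super-cycle increments the rightmost digit from $\0$ up through $\D$ via (S1)--(S3), then propagates a carry of some depth $k \geq 1$ via one application of (S4), one of (S5)--(S8), $k-1$ applications of (S9), and one (S10). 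Finally an overflow super-cycle occurs in which the head walks left through all $j$ copies of $\D$ via $j$ applications of (S4) and halts in $A = [\u\-, \textsc{carry}]$. Summing, $L(j) \geq (\D+1)^j$ for $j$ large. Since the iterates $T_X^k(C_j)$ for $0 \leq k < L(j)$ are pairwise disjoint measure-preserving translates of $C_j$, we obtain $\mu(Y(j)) = L(j)\,\mu(C_j) \geq (\D+1)^j / \bigl(4(\D+2)^{j+2}\bigr)$, and solving $\mu(Y(j)) \geq c\,L(j)^{-d}$ yields property (A) with $d = \log((\D+2)/(\D+1))/\log(\D+1)$, which tends to $0$ as $\D \to \infty$.

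\emph{Property (C).} The only non-neutral instructions whose output carries state \textsc{zero-prev-digits} are (S5)--(S9), each with group element of the form $t\varphi\sigma$ for some $\varphi \in \aut(\MM)$; the net effect is to replace the input pos-$0$ symbol by $\varphi$ applied to it, then shift, so that this new value appears at pos $-1$ of the output. Since $\varphi$ is one of $(\D \to \0)$ or $(k \to k{+}1)$ with $k \in \{\0,\ldots,\D-1\}$, the image of its active argument lies in $\{\0,\ldots,\D\}$ and is never $\-$; hence no output of (S5)--(S9) lies in $I = [\-\,\u\D, \textsc{zero-prev-digits}]$. Neutral instructions fix their points, and the cylinders contained in $I$ are subsets of (S9) or (S10), so they contribute nothing either. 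Thus $I \cap T_X(X) = \emptyset$.

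\emph{Property (B).} The key observation is that none of (S1)--(S10) converts $\-$ to a non-$\-$ symbol or vice versa; the set of tape positions occupied by $\-$ is therefore preserved by $T_X$ up to the at-most-one-step shift coming from $t^{\pm 1}$. A preimage of a point in $Y(j)$ must consequently carry a $\-$-framed window of exactly length $j$, ruling out preimages from any $Y(j')$ with $j' \ne j$. A case analysis on the output state (\textsc{inc-last-digit}, \textsc{carry}, or \textsc{zero-prev-digits}) together with the values at positions $-1$, $0$, $1$ then shows that each output configuration arising on the chain of $C_j$ is produced by at most one of (S1)--(S10), uniquely determining the preimage as the preceding point on the same chain. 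This gives (B).

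\emph{Main obstacle.} The principal technical step is the reversibility check underlying (B): one must verify that the instructions (S1)--(S10), whose defining cylinders overlap nontrivially, have pairwise disjoint output signatures once the output state and the values at the three nearby tape positions are specified. The explicit carry-algorithm design of the instructions is precisely what makes this case analysis succeed; the measure estimate in (A) then follows formally from the length bound $L(j) = \Theta((\D+1)^j)$, the formula $\mu(C_j) = 1/\bigl(4(\D+2)^{j+2}\bigr)$, and the disjointness of the forward iterates.
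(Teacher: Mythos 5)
Correct, and essentially the paper's own argument: the explicit trace of the carry machine gives chain length at least $(\D+1)^j$ and $\mu(Y(j))\ge (\D+1)^j/\bigl(4(\D+2)^{j+2}\bigr)$, hence (A) with $d\to 0$; (C) follows because every non-neutral instruction whose output state is \textsc{zero-prev-digits} deposits a digit (never $\-$) at position $-1$; and (B) reduces, exactly as in the paper, to the pairwise disjointness of the resulting sets of the instructions. The one step you assert rather than verify --- that the output state together with the symbols at positions $-1,0,1$ separates these resulting sets --- is precisely the paper's Cases 1--3 and does check out, so there is no gap (only two minor slips: a carry of depth $k$ uses $k$ applications of (S4), not one, and the \emph{defining} cylinders of the instructions are in fact pairwise disjoint by construction; it is the resulting sets whose disjointness needs the case analysis).
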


\begin{proof}
First we need to check that each $Y$ is indeed a union of disjoint computational paths. We first explicitly check that the map $T_X$ indeed acts as a machine which keeps adding $1$ to its input. We take 
$$x\in [\-\,\u\D\,\D^{j-1}\,\-,\textsc{zero-prev-digits}],$$
denote it by 
$$
(\-\,\u\D\,\D^{j-1}\,\-,\textsc{zero-prev-digits})
$$
and write its trajectory under $T_X$ in Figure 2. Now a direct examination shows that each trajectory ends in the accepting set  $[\u{\-}, \s{carry}]$ (so that each trajectory is a computational chain) and that the trajectories are  indeed pair-wise disjoint.

\begin{figure}[here]
\centering
\begin{tabular}{|l|l|}
$(\ppp\,\u\D\,\D^{j-1}\,\ppp,\textsc{zero-prev-digits})$          & $(\ppp\,\0^{j-2}\,\1\,\u\0\,\ppp, \textsc{inc-last-digit})$    \\
$(\ppp\,\0\,\u\D\,\D^{j-2}\,\ppp,\textsc{zero-prev-digits})$      &  $\ldots$     \\
$\ldots$                                                    & $(\ppp\,\D^{j-1}\,\u\0\,\ppp, \textsc{inc-last-digit})$   \\
$(\ppp\,\0^{j-1}\,\u\D\,\ppp, \textsc{zero-prev-digits})$          & $(\ppp\,\D^{j-1}\,\u\1\,\ppp, \textsc{inc-last-digit})$            \\
$(\ppp\,\0^{j-1}\,\u\0\,\ppp, \textsc{inc-last-digit})$             & $\ldots$           \\
$(\ppp\,\0^{j-1}\,\u\1\,\ppp, \textsc{inc-last-digit})$             & $(\ppp\,\D^{j-1}\,\u\D\,\ppp, \textsc{inc-last-digit})$        \\
$\ldots$                                                    &  $(\ppp\,\D^{j-1}\,\u\D\,\ppp, \textsc{carry})$    \\
$(\ppp\,\0^{j-1}\,\u\D\,\ppp, \textsc{inc-last-digit})$            & $(\ppp\,\D^{j-2}\,\u\D\,\D\ppp, \textsc{carry})$           \\
$(\ppp\,\0^{j-1}\,\u\D\,\ppp, \textsc{carry})$                     & $\ldots$        \\
$(\ppp\,\0^{j-2}\,\u\0\,\D\,\ppp, \textsc{carry})$                 & $(\ppp\,\u\D\,\D^{j-1}\,\ppp,\textsc{carry})$        \\
$(\ppp\,\0^{j-2}\,\1\,\u\D\,\ppp, \textsc{zero-prev-digits})$       & $(\u\ppp\,\D\,\D^{j-1}\,\ppp,\textsc{carry})$               \\
\end{tabular}
\caption{}\label{fig-2}
\end{figure}


It is straightforward to show that  the condition (A) is fulfilled: since any number between $\0\,\0\,\ldots\,\0$ ($j$ times) and $\D\,\D\,\ldots\,\D$ ($j$ times) appears on the tape, it follows that the computational chain as in Figure \ref{fig-2} has length at least $(\D+1)^j$. For the same reason, and since the measure on $X$ is the product measure, we have that the measure of $Y(j)$ is at least$\frac{1}{|\SS|} \left(\frac{\D+1}{\D+2}\right)^j\left(\frac{1}{\D+2}\right)^2$. 

It follows that in the condition (A) we can take $C:= (|\SS|(\D+2))^{-2}$, and any $d$ such that $\frac{1}{(\D+1)^d} < \left(\frac{\D+1}{\D+2}\right)$.


Conditions (B) and (C) follow from a careful case-by-case analysis of our dynamical software. Let us start with (C). We have to check that the resulting set of every instruction  \eqref{s1}-\eqref{s11} intersects trivially the initial set $I=[\-\,\u{\D}, \s{zero-prev-digits}]$. Just by considering the resulting state (i.e.~the $\SS$-coordinate of the resulting set),  we obtain the trivial intersection of $I$ with the resulting sets of instructions \eqref{s1}-\eqref{s4} and \eqref{s11}.

The resulting set of \eqref{s5} is
$$
t(\texttt{\large{D{-}1}}{\to}\D)(\s{carry}{\to}\s{zero-prev-digits}).[\u{\texttt{\large{D{-}1}}}\,{\red\-}, \s{carry}],
$$
which is equal to $[\D\,{\u{\red\-}}, \s{zero-prev-digits}]$, which clearly has trivial intersection with  $[\-\,\u{\D}, \s{zero-prev-digits}]$. Instructions \eqref{s6}-\eqref{s8} are checked similarly.

For \eqref{s10} the resulting set is 
$$
t(\D{\to}\0) . [\u{\D}\,{\red\-}, \s{zero-prev-digits}],
$$
which is equal to   $[\0\,\u{\red\-}, \s{zero-prev-digits}]$, which also intersects the initial set trivially.


Let us now check the  condition (B). We just saw that the first element in Figure \ref{fig-2} has no preimage under $T_X$. On the other hand it is clear that every other element in Figure \ref{fig-2} has exactly one preimage under $T_X$ which is also an element of $Y$ (namely the preceding element in Figure \ref{fig-2}). It follows that to check the condition (B) it is enough to show that for every pair $((\be_1,B_2), (\be_2, B_2))$ of different instructions from among $\eqref{s1}-\eqref{s11}$ we have 
$$
\be_1.B_1 \cap \be_2.B_2 = \emptyset.
$$

We only consider those pairs of instructions where it is not enough to look at the resulting state to show the trivial intersection. For example we would not consider the pair \eqref{s1} and \eqref{s3}, because 
$$
(\0{\to}\1).[\u\0\,\-, \s{inc-last-digit}] \subset [\s{inc-last-digit}]
$$
and 
$$
(\s{inc-last-digit}{\to}{\s{carry}}). [ \u{\texttt{\large{D}}}\,\-, \s{inc-last-digit}] \subset [\s{carry}],
$$
and so the intersection is trivial.

As such we consider three cases, depending on what is the resulting state.

\textbf{Case 1.} The resulting state is \s{inc-last-digit}.

We have to consider the instructions \eqref{s1}-\eqref{s2} and \eqref{s11}. If both instructions are from among \eqref{s1}-\eqref{s2}, it is enough to look at the underlined symbol. Thus we consider a pair consisting of  \eqref{s11} and one of the instruction \eqref{s1}-\eqref{s2}. As for \eqref{s11}, we have
\begin{multline*}
(\D{\to}\0)(\s{zero-prev-digits}{\to}\s{inc-last-digit}) . [\u{\D}\,\-, \s{zero-prev-digits}]= \\
= [\u{\0}\,\-, \s{inc-last-digit}],
\end{multline*}
which is disjoint from the resulting sets of the instructions \eqref{s1} and \eqref{s2}, as they are both contained in $[{\red\u\0},\s{inc-last-digit}]$.

\textbf{Case 2.} The resulting state is \s{carry}.

We have to consider the instructions \eqref{s3} and \eqref{s4}. For \eqref{s3} we have
$$
(\s{inc-last-digit}{\to}{\s{carry}}). [ \u{\texttt{\large{D}}}\,\-, \s{inc-last-digit}] = [ \u{\texttt{\large{D}}}\,\-, \s{carry}], 
$$
and for \eqref{s4} we have 
$$
t^{-1} . [\u\D, \s{carry}] = [\u{\ast}\, \D,\s{carry}],
$$
and so the intersection is trivial because $\D \neq \ppp$.

\textbf{Case 3.} The resulting state is \s{zero-prev-digits}.

We have to consider the instructions \eqref{s5}-\eqref{s8} and \eqref{s10}. If both instructions are from among \eqref{s5}-\eqref{s8}, considering the symbol immediately to the left of the underlined symbol establishes the trivial intersection. 

For the case when one of the instruction is \eqref{s10}, note that the resulting sets of \eqref{s5}-\eqref{s8} are all contained in 
$$
    [{\red\0}\,\u\ast,\s{zero-prev-digits}],
$$
and the resulting set of \eqref{s10} is contained in $[0\,\u\ast,\s{zero-prev-digits}]$.

\end{proof}

We have all the ingredients to finish the proof of Theorem \ref{thm-main}.

\textit{Proof of Theorem \ref{thm-main}.}
For every $d>0$, Proposition \ref{prop-is-good} gives a Turing dynamical system $(X,T_X)$ which fulfils the properties (A), (B) and (C).  Theorem \ref{thm-good-implies-main} gives us an  operator $S\in \Ga\ltimes L^\infty(X)$ such that for some constant $C'>0$ and a sequence $\eps_j>0$ converging to $0$ we have
$$
\mu_S((0,\eps_j)) \ge \frac{C'}{|\log(\eps_j)|^{d+1}}.
$$.
 
Furthermore, we constructed our Turing dynamical system in such a way that $(X,\mu)$ is a compact abelian group with the normalized Haar measure, the action $\Ga\actson X$ is by continuous group automorphisms, and the indicator functions of $X_i$ are in the image of the Pontryagin duality  embedding $\Q[A]\to L^\infty (X)$, where $A$ is the Pontryagin dual of $X$. As such, Lemma \ref{lem-pass-to-group} gives us an element $\wh S\in \Q[\Ga\ltimes A]$ whose spectral measure is the same as the spectral measure of $S$. This finishes the proof. 
\qed

\section{Proof of Lemma \ref{lem-small-eigen}}\label{sec-lemma}

We finish this note by proving Lemma \ref{lem-small-eigen}. Let us recall the statement for reader's convenience.

\begin{figure}[here]
\centering
\begin{tikzpicture}
  [scale=1.1,auto=center]
  \tikzset{edge/.style = {->,> = latex'}}
  \node (n1) at (1,1) {$\bullet$};
  \node (n2) at (3,1)  {$\bullet$};
  \node (n3) at (5,1)  {$\ldots$};
  \node (n4) at (7,1) {$\bullet$};

  \foreach \from/\to in {n1/n2,n2/n3,n3/n4}
    \draw[edge, bend right=23] (\from) to node[below] {$2$} (\to);
  \foreach \from/\to in {n1/n2,n2/n3,n3/n4}
    \draw[edge, bend right=23] (\to) to node[above] {$2$} (\from);
  \foreach \from/\to in {n2/n2,n4/n4}
    \draw[loop above,thick,edge] (\from) to node {$5$} (\to);
    \draw[loop above,thick,edge] (n1) to node {$1$} (n2);
\end{tikzpicture}
\caption{}\label{fig-app}
\end{figure}

\begin{replemma}{lem-small-eigen} 
There is $0<D<1$ such that for all $m\ge 2$ the adjacency operator $S_m$ has a positive eigenvalue smaller than $D^m$.\qed
\end{replemma}

\begin{proof}
The matrix of the adjacency operator of the graph on Figure \ref{fig-app} with $m$ nodes  is 
$$
U_m :=  \left( \begin{array}{cccccc}
1 & 2 &  &  &   \\
2 & 5 & 2 &   &   \\
 & 2 & 5  &  &   \\
& & & \ldots & & \\
 & & &  & 5 &2 \\ 
  & & &  &2 & 5
\end{array} \right)
$$

We check by hand that $\det(U_1) = \det(U_2) =1$, and we prove inductively that $\det(U_m)=1$ for all $m>1$ by expanding the determinant along the last row.

Note that the matrix 
$$
V_m :=  \left( \begin{array}{cccccc}
0 & 2 &  &  &   \\
2 & 0 & 2 &   &   \\
 & 2 & 0  &  &   \\
& & & \ldots & & \\
 & & &  & 0 &2 \\ 
  & & &  &2 & 0
\end{array} \right)
$$
is equal to $2V'_m$, where $V'_m$ is the adjacency matrix of a bipartite graph of maximal degree $2$. Recall that the spectrum of a bipartite graph is symmetric (\cite[Proposition 8.2]{MR1271140}). It follows that the eigenvalues of $V_m$ are contained in $[-4,4]$ and there are as many eigenvalues in $[-4,0]$ as in $[0,4]$, and so the eigenvalues of $U_M + \diag(4,0,\ldots,0)$ are contained in $[1,9]$, and at least half of them are contained in $[5,9]$ ("at least" because of the eigenvalue "5").

Let us recall the Weyl's inequality for rank one perturbations (e.g.~\cite[Theorem 4.3.4]{MR1084815}). Let $\la_1\le \ldots \le \la_m$ be the eigenvalues of $U_m$ and $\ka_1\le \ldots, \ka_m$ be the eigenvalues of $U_m+\diag(4,0,\ldots, 0)$. Then for $k=1,\ldots, m-1$ we have
$$
    \ka_k \le \la_{k+1}.
$$

It follows that $1\le \la_2$, and $5 \le \la_{\lfloor m/2 \rfloor+1}$. Estimating ${\lfloor m/2 \rfloor}$ by $\frac{m}{3}$ from below we obtain
$$
1=\det(U_m) = \la_1\la_2\ldots \la_m > \la_1 \cdot 5^\frac{m}{3},
$$
and therefore we can take $D:= 5^{\frac{1}{3}}$.
\end{proof}

\begin{remark}
We note in passing that the matrices $U_m$ from the proof of Lemma \ref{lem-small-eigen} give a counterexample to the determinant approximation conjecture in the context of graph sequences convergent in the \textit{Benjamini-Schramm} sense (see \cite[Chapter 19]{MR3012035} for the definition). Such counterexamples have been known among experts for a fairly long time, although none seem to have been published. This particular counterexample is due to  L. Lov{\'a}sz.

Indeed, on the one hand we have that $\det(U_m)= 1$ for all $m$. On the other hand, we have established in the proof of Lemma \ref{lem-small-eigen} that at least third of the eigenvalues of $\det(U_m + \diag(4,0,\ldots,0))$ are larger or equal to $5$, and all of them are larger  or equal to $1$. This shows $(\det(U_m + \diag(4,0,\ldots,0)))^{\frac{1}{m}} \ge 5^{\frac{1}{3}}$. We conclude that given a sequence of finite graphs which is convergent in the Benjamini-Schramm sense to a Cayley graph of $\ZZ$, it is not true that the normalized determinants of the adjacency matrices on the finite graphs  converge to the Fuglede-Kadison determinant of the adjacency operator the Cayley graph.

\end{remark}
\section*{Appendix - A counterexample to Conjecture \ref{conj-lott-lueck} from mathematical physics}
In this appendix we present the following unpublished observation of the author and B. Vir\'ag: a counterexample to Conjecture \ref{conj-lott-lueck} can be also deduced from the mathematical physics literature. 

Let us specialize to the following situation. Let $\ZZ\actson X$ be an essentially free action (i.e.~free on a subset of full measure). Let $t\in \ZZ$ be a fixed generator, let $F\colon X \to \R$ be a bounded measurable function and let $T\in \Ga\ltimes L^\infty (X)$ be 
\beq\label{eq-nonrandom-operator}
T:= tF +Ft^{-1} = tF + t^{-1}(t.F).
\eeq

For $x\in X$ let $T(x)\colon l^2(\Z)\to l^2(\Z)$ be defined on the standard basis vectors as  
\beq\label{eq-random-operator}
T(x)\zeta_k := F(t^k.x)\zeta_{k+1} + F(t^{k-1}.x)\zeta_{k-1}.
\eeq 

Let us identify $l^2(\ZZ)$ with $l^2(\Z)$ in the natural way. As explained in Section \ref{sec-prelim}, $T$ is an operator on $\int^\oplus_X l^2(\Z) \, d\mu(x)$ which preserves the fibers $l^2(\Z)$. A direct check shows that $T$ decomposes as 
$$
\int^\oplus_X T(x)\,d\mu(x).
$$

Since $\tau(T) = \int_X  \langle T(x)\zeta_0,\zeta_0\rangle$, we have the following formula for the spectral measure of $\mu_T$ of $T$:
\beq\label{eq-expect-rooted}
\mu_T (D) = \int_X \langle P(x,D)\zeta_0, \zeta_0\rangle\,d\mu(x),
\eeq
where $D\subset \R$ is a measurable set and $P(x,D)$ is the spectral projection of $T(x)$ corresponding to the set $D$ (see e.g. \cite[Lemma 1.9]{arxiv:grabowski-2010}).  

The measure $D\mapsto \langle P(x,D)\zeta_0, \zeta_0\rangle$ on $\R$ is called the \textit{rooted spectral measure} of $T(x)$.  Equation \eqref{eq-expect-rooted} can be concisely summarized as follows. The family $T(x)$, $x\in X$, is a \textit{random operator}, and the spectral measure of $T$ is equal to the \textit{expected rooted spectral measure of $T(x)$}.

We specialize further as follows. Let $(S,\nu)$ be a compact abelian group with the normalized Haar measure, let $A$ be its Pontryagin dual, and let $X=\prod_\Z S$ together with the product measure. Consider the action  $\ZZ \actson X$ given by shifting the coordinates. Let $f\colon S \to \R$ be a function in the image of the Pontryagin duality map $\Q[A]\to L^\infty(S)$, and let $F\colon X \to \R$ be defined as $F((x_i)) := f(x_0)$. 

For $x=(x_i)\in X$ the formula \eqref{eq-random-operator} becomes
\beq\label{eq-random-operator-2}
T(x)\zeta_k = f(x_k)\zeta_{k+1} + f(x_{k-1})\zeta_{k-1}.
\eeq 
Such families of operators have been studied in mathematical physics at least since \cite{MR0059210}.  We provide a small dictionary. First, usually there would be no reference to a concrete measure space $X$. The equation \eqref{eq-random-operator-2} would be written as
\beq\label{eq-random-ham}
H\zeta_k = W(k)\zeta_{k+1} + W(k-1)\zeta_{k-1},
\eeq
together with the assumption that the numbers $W(k)$ are random, independent, and distributed according to some probability measure $\phi$ on $\R$. In the case of \eqref{eq-random-operator-2} the measure $\phi$ is the push-forward of $\nu$ through $f$, i.e.~$\phi(D):=\nu(f^{-1}(D))$.

The operator $H$ is the \textit{Hamiltonian} associated to the \textit{one-dimensional disordered chain} whose disorder is \textit{i.i.d.}, distributed according to the \textit{law} $\phi$.  The spectral measure of the original $T\in \ZZ\ltimes L^\infty(X)$ given by \eqref{eq-nonrandom-operator} is referred to as the \textit{expected spectral measure}, or the \textit{expected density of states}, in order to differentiate it from the rooted spectral measures of the operators $T(x)$, $x\in X$.

In his very impressive article,  Dyson \cite{MR0059210}  considered a disorder distributed according to some specific measure $\phi$. He showed that for that specific disorder the expected spectral measure $\mu_H$ has the property
\beq\label{eq-dyson-sing}
\mu_H((0,\eps)) \approx \frac{1}{|\log(\eps)|^2},
\eeq
where $\approx$ means that the ratio of both sides approaches a positive constant when $\eps\to 0$. Since then, such behaviour is referred to as the \textit{Dyson's singularity}. It is the most eminent qualitative difference between the disordered chains and the situation without a disorder, i.e.~when all $W(k)$ are equal to $1$.

We cannot use Dyson's result to give a counterexample to Conjecture \ref{conj-lott-lueck}, because the measure $\phi$ he considered is not supported in a bounded interval. Accordingly $\phi$ is not a push-forward of $\nu$ through  $F\in L^\infty(X)$ (since, by definition, such $F$ must be \textit{bounded}, and so the push-forward of \textit{any} measure through $F$ is supported in a bounded interval). In particular, the Hamiltonian considered by Dyson is not an element of the von Neumann algebra $\ZZ\ltimes L^\infty (X)$.

However, Dyson's singularity \eqref{eq-dyson-sing} has been conjectured to appear for arbitrary measures $\phi$ which have a non-atomic part. So far, the exact form \eqref{eq-dyson-sing} has been confirmed only via heuristic arguments (e.g.~\cite{PhysRevB.18.569}). However, the following has been rigorously established. Recall that $G\in L^1(\R)$ is  the  \textit{density} of a measure $\phi$ if for every measurable $D\subset \R$ we have $\phi(D) = \int_D G(x) \,dx$.

\begin{theorem}[Campanino and Perez \cite{MR1014114}]
Let  $W$ in \eqref{eq-random-ham} be distributed according to a measure $\phi$ with a continuous density supported in an interval $(a, b)$, where $b>a>0$. Then the expected spectral measure $\mu_H$ of \eqref{eq-random-ham} has the property that 
$$
\mu_H((0,\eps)) \ge \frac{C}{{|\log(\eps)|^3}}
$$
for some $C>0$ and all sufficiently small $\eps$.\qed
\end{theorem}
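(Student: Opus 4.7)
The plan is to use the transfer-matrix and Prüfer-angle methods that are standard in one-dimensional random Schrödinger theory. The eigenvalue equation $H\psi = E\psi$ reads $W(k)\psi(k+1) + W(k-1)\psi(k-1) = E\psi(k)$, which rewrites as an $\SL(2,\R)$-valued cocycle acting on $(\psi(k+1), \psi(k))^T$. I would introduce Prüfer variables $(r_n(E), \theta_n(E))$ for this product. The expected spectral measure $\mu_H$ coincides with the integrated density of states $N(E)$ obtained as the thermodynamic-limit eigenvalue count on finite intervals, and Sturm oscillation for Jacobi matrices identifies $\mu_H((0,\eps]) = N(\eps) - N(0)$ with the asymptotic average rotation of $\theta_n$ as the energy sweeps from $0$ to $\eps$.

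The key observation is that at $E = 0$ the recursion decouples into even and odd sublattices: one has $\psi(2n) = (-1)^n\prod_{j=1}^{n} \tfrac{W(2j-2)}{W(2j-1)}\psi(0)$ and a similar formula on odd sites. Hence $\log|\psi(2n)/\psi(0)|$ is a sum of i.i.d.\ mean-zero random variables, and the Lyapunov exponent $\gamma(E)$ vanishes at $E = 0$. This non-hyperbolic degeneracy is the mechanism behind Dyson's singularity: near $E = 0$ the generalized eigenfunctions diffuse like $\exp(c\sqrt n)$ rather than growing exponentially, so a perturbation of size $E$ becomes dynamically relevant only on the length scale $n \asymp |\log E|^{2}$. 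Heuristically this predicts $\mu_H((0,\eps)) \asymp 1/|\log\eps|^{2}$, the sharp asymptotics Dyson obtained for his specific distribution.

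For the rigorous lower bound I would count Dirichlet eigenvalues on a finite box $[1, L]$. Sturm oscillation says the number of eigenvalues of $H_L$ in $(0, \eps)$ equals the number of sign changes, as $E$ ranges over $(0, \eps)$, of the solution $\psi_k(E)$ with $\psi_0(E) = 0$. At $E = 0$ the solution is a product of the form above, so $\log|\psi_k(0)|$ is a random walk which, by the law of the iterated logarithm, has typical ``near-zeros'' separated by distance of order $|\log\eps|^{2}$ at the scale $\eps$. A small positive $E$ converts a definite fraction of those near-zeros into genuine sign changes; quantifying this with large-deviation and concentration estimates for the walk yields of order $L/|\log\eps|^{3}$ eigenvalues in $(0, \eps)$ on a box of length $L$, and passing to the thermodynamic limit gives $\mu_H((0,\eps)) \ge C/|\log\eps|^{3}$. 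The hypothesis that $\phi$ has a continuous density supported in $[a, b] \subset (0, \infty)$ enters in two ways: it ensures $\log W(k)$ has a continuous distribution with finite variance (so the random-walk estimates apply and there is no $\mu_H$-atom at $0$), and it keeps the transfer matrices uniformly in $\SL(2,\R)$ (so the Prüfer dynamics is smooth).

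The main obstacle is implementing this third step rigorously: one must control the $E$-dependence of the sign changes of $\psi_k(E)$ uniformly in $E \in (0,\eps)$, combining the Sturm comparison principle with fine large-deviation bounds for the random walk $\log|\psi_k(0)|$ and with a Markov renewal argument locating successive near-zeros. The loss of an extra factor of $|\log \eps|$ compared to Dyson's $|\log \eps|^{-2}$ heuristic comes from the cost of passing from typical random-walk behaviour to a simultaneous high-probability estimate across all energies in $(0, \eps)$, and is essentially the sharpest currently available rigorous bound.
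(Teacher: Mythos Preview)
The paper does not prove this theorem. It is quoted as a black-box result from the literature (Campanino--Perez, reference \cite{MR1014114} in the paper), with the \qedsymbol\ placed immediately after the statement; the paper only \emph{uses} it to exhibit a counterexample to Conjecture~\ref{conj-lott-lueck}. So there is no proof in the paper to compare your proposal against.

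That said, your sketch identifies the correct mechanism and is broadly in the spirit of the original Campanino--Perez argument: the decoupling of the $E=0$ recursion into even and odd sublattices, the resulting mean-zero random walk $\log|\psi_{2n}(0)|$, the vanishing Lyapunov exponent at $E=0$, and the Sturm/rotation-number count on finite boxes are exactly the ingredients that drive Dyson's singularity. Where your proposal falls short of a proof is precisely where you say it does. The assertion that near-zeros of the random walk occur with spacing of order $|\log\eps|^2$ ``at the scale $\eps$'' is stated without a precise formulation or estimate, and the crucial claim that a small positive $E$ converts a \emph{definite fraction} of those near-zeros into genuine sign changes, uniformly over $E\in(0,\eps)$, is left as a heuristic. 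The loss of one power of $|\log\eps|$ relative to Dyson's sharp $|\log\eps|^{-2}$ is also not really derived; you attribute it to a uniformity cost, but no inequality is written down that would produce exactly this exponent. In short, what you have is a credible roadmap toward the Campanino--Perez theorem rather than a proof, and since the paper itself offers no proof at all, your outline already goes further than the paper does.
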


As such, to provide a counterexample to Conjecture \ref{conj-lott-lueck}, it is enough to find a compact abelian group $(S,\nu)$ together with its Pontryagin dual $A$ and $\wh f\in \Q[A]$ such that the push-forward of $\nu$ through $f\colon S\to \R$ has a continuous density supported in some interval $(a, b)$, where $b>a>0$. A simple exercise in the Fourier transform shows that we can take $A=\ZZ^3$ and $\wh f = s_1+s_1^{-1}+s_2+s_2^{-1}+s_3+s_3^{-1}+7$.

Together with Lemma \ref{lem-pass-to-group} we obtain the following.

\begin{cory}
Let $T\in \Q[\ZZ^3\wr \ZZ]$ be given as 
$$
T:= (s_1+s_1^{-1}+s_2+s_2^{-1}+s_3+s_3^{-1}+7)t + t^{-1}(s_1+s_1^{-1}+s_2+s_2^{-1}+s_3+s_3^{-1}+7).
$$
Then the spectral measure $\mu_T$ of $T$ has the property that 
$$
\mu_T((0,\eps)) \ge \frac{C}{{|\log(\eps)|^3}}
$$
for some $C>0$ and all sufficiently small $\eps$.\qed
\end{cory}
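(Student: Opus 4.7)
The plan is to deduce the corollary from the Campanino--Perez theorem just stated, after translating the statement from the group ring $\Q[\ZZ^3\wr\ZZ]$ into the group-measure space construction $\ZZ\ltimes L^\infty(X)$ via Pontryagin duality, and identifying the resulting operator with a random Jacobi matrix of the form \eqref{eq-random-ham}.

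First I would write $\ZZ^3\wr\ZZ = \ZZ\ltimes A$ with $A:=\bigoplus_{i\in\ZZ}\ZZ^3$. The Pontryagin dual of $A$ is $X:=\prod_{i\in\ZZ}\T^3$, equipped with its Haar measure $\mu=\prod_i\nu$, and the dual action of $\ZZ$ is the shift of coordinates. Under the embedding $\Q[A]\to L^\infty(X)$ of \eqref{eq-pontryagin-emb}, the element
\[
\wh f := s_1+s_1^{-1}+s_2+s_2^{-1}+s_3+s_3^{-1}+7 \in \Q[\ZZ^3]\subset\Q[A],
\]
viewed as living in the zeroth copy of $\ZZ^3$, becomes $F\in L^\infty(X)$ given by $F((x_i)):=f(x_0)$, where $f(\theta_1,\theta_2,\theta_3):=2\cos(2\pi\theta_1)+2\cos(2\pi\theta_2)+2\cos(2\pi\theta_3)+7$. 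Consequently $T=\wh f\,t+t^{-1}\wh f$ maps to $Ft+t^{-1}F$ in $\ZZ\ltimes L^\infty(X)$, and Lemma \ref{lem-pass-to-group} identifies the two spectral measures. A direct fiberwise computation, as in the derivation of \eqref{eq-random-operator-2}, shows that $Ft+t^{-1}F$ decomposes as $\int^\oplus_X T(x)\,d\mu(x)$ where each $T(x)$ is a self-adjoint Jacobi operator on $l^2(\ZZ)$ with off-diagonal entries $\{F(t^j.x)\}_{j\in\ZZ}=\{f(x_j)\}_{j\in\ZZ}$, i.e.\ i.i.d.\ samples from the push-forward $\phi:=f_*\nu$. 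By \eqref{eq-expect-rooted}, $\mu_T$ is the expected density of states of the random Hamiltonian \eqref{eq-random-ham} with weight law $\phi$.

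It remains to verify that $\phi$ meets the hypotheses of the Campanino--Perez theorem: a continuous density supported in some interval $(a,b)$ with $b>a>0$. The support condition is immediate: the range of $f$ lies in $[7-6,7+6]=[1,13]$, so $\supp\phi\subset(1/2,14)$. For continuity of the density I would use Fourier analysis. The law $\phi_0$ of $2\cos(2\pi\Theta)$ for $\Theta$ uniform on $[0,1)$ is the arcsine distribution on $[-2,2]$, whose characteristic function is (a standard constant multiple of) $J_0(2t)$ with $|J_0(t)|=O(|t|^{-1/2})$ as $|t|\to\infty$. By independence, $\phi$ is the threefold convolution $\phi_0*\phi_0*\phi_0$ translated by $7$, so its characteristic function equals $e^{7it}J_0(2t)^3$, which decays like $O(|t|^{-3/2})$ and is therefore in $L^1(\R)$. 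Fourier inversion then produces a continuous (bounded) density for $\phi$, and the Campanino--Perez theorem yields $\mu_T((0,\eps))\ge C/|\log\eps|^3$ for all sufficiently small $\eps$.

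The only non-formal step is the continuity of the density of $\phi$; the remainder is bookkeeping through the Pontryagin duality and the direct-integral decomposition. An alternative route to continuity, if one prefers not to use Bessel asymptotics, is to observe that $\phi_0*\phi_0$ is already a continuous function on $\R$ (the square-root endpoint singularities of $\phi_0$ combine into an integrable logarithmic-type singularity), and convolving once more with the bounded density $\phi_0$ preserves continuity.
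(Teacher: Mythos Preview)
Your main argument is correct and follows exactly the route the paper intends: the corollary is stated with a \qed\ because the preceding discussion \emph{is} its proof, modulo the ``simple exercise in the Fourier transform'' which you carry out explicitly via the Bessel asymptotics. The Pontryagin translation, the identification with a random Jacobi operator, and the application of Campanino--Perez are all as in the paper.

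One correction, though it does not affect the validity of the proof: your alternative route to continuity is flawed. The arcsine density $\phi_0$ is \emph{not} bounded (it blows up like $(4-x^2)^{-1/2}$ at the endpoints), and $\phi_0*\phi_0$ is \emph{not} continuous: at $y=0$ one gets $\int_{-2}^{2}\frac{dx}{\pi^2(4-x^2)}$, which diverges logarithmically. So the twofold convolution has a genuine logarithmic singularity at the origin, not merely at the edges, and one cannot then appeal to ``convolving with a bounded density''. The Bessel argument, on the other hand, is clean and is presumably what the paper has in mind.
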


\begin{remarks}
At least one other example of a group ring element with an interesting spectral measure can be exhibited using mathematical physics literature. Let $a$ be the non-trivial element of $\ZZ_2$ and let $T\in \R[\ZZ_2\wr \ZZ]$ be given as $T:= t+t^{-1} + \be\cdot a \in \R[\ZZ_2\wr \ZZ]$, where $\be \in \R$. Then \cite{MR914426} implies, repeating the discussion above, that when $\be$ is large enough, the spectral measure of $T$ has only singularly continuous part, i.e.~it has no atoms and it is not possible to write it as $f\la$, where $f$ is a measurable function and $\la$ is the Lebesgue measure.

This is the only example of such  behaviour known to the author. It has been conjectured that no singularly continuous part appears in the spectral measure of $T$ when $T$ is an element of the group ring of a torsion-free group (\cite{thom_sofic_groups_and_diophantine_approximation}). Unfortunately the mathematical physics literature does not seem to provide a counterexample to that conjecture.
\end{remarks}

\vspace{5pt}
\textbf{Acknowledgements.}
The author would like to thank Holger Kammeyer, Thomas Schick and an anonymous referee for valuable comments.




\bibliographystyle{alpha}
\bibliography{bibliografia}

\begin{thebibliography}{{Gra}10}

\bibitem[Aus13]{arxiv:austin-2009}
Tim Austin.
\newblock Rational group ring elements with kernels having irrational
  dimension.
\newblock {\em Proc. Lond. Math. Soc. (3)}, 107(6):1424--1448, 2013.

\bibitem[Big93]{MR1271140}
Norman Biggs.
\newblock {\em Algebraic graph theory}.
\newblock Cambridge Mathematical Library. Cambridge University Press,
  Cambridge, second edition, 1993.

\bibitem[BVZ97]{MR1436310}
C{\'e}dric B{\'e}guin, Alain Valette, and Andrzej Zuk.
\newblock On the spectrum of a random walk on the discrete {H}eisenberg group
  and the norm of {H}arper's operator.
\newblock {\em J. Geom. Phys.}, 21(4):337--356, 1997.

\bibitem[Cla99]{MR1704205}
Bryan Clair.
\newblock Residual amenability and the approximation of {$L^2$}-invariants.
\newblock {\em Michigan Math. J.}, 46(2):331--346, 1999.

\bibitem[CP89]{MR1014114}
Massimo Campanino and J.~Fernando Perez.
\newblock Singularity of the density of states for one-dimensional chains with
  random couplings.
\newblock {\em Comm. Math. Phys.}, 124(4):543--552, 1989.

\bibitem[DS02]{Dicks_Schick}
Warren Dicks and Thomas Schick.
\newblock The spectral measure of certain elements of the complex group ring of
  a wreath product.
\newblock {\em Geom. Dedicata}, 93:121--137, 2002.

\bibitem[Dys53]{MR0059210}
Freeman~J. Dyson.
\newblock The dynamics of a disordered linear chain.
\newblock {\em Physical Rev. (2)}, 92:1331--1338, 1953.

\bibitem[Eck00]{Eckmann_intro}
Beno Eckmann.
\newblock Introduction to {$l_2$}-methods in topology: reduced
  {$l_2$}-homology, harmonic chains, {$l_2$}-{B}etti numbers.
\newblock {\em Israel J. Math.}, 117:183--219, 2000.
\newblock Notes prepared by Guido Mislin.

\bibitem[ER78]{PhysRevB.18.569}
T.~P. Eggarter and R.~Riedinger.
\newblock Singular behavior of tight-binding chains with off-diagonal disorder.
\newblock {\em Phys. Rev. B}, 18:569--575, Jul 1978.

\bibitem[ES05]{MR2178069}
G{\'a}bor Elek and Endre Szab{\'o}.
\newblock Hyperlinearity, essentially free actions and {$L^2$}-invariants.
  {T}he sofic property.
\newblock {\em Math. Ann.}, 332(2):421--441, 2005.

\bibitem[Fol95]{Folland:A_course_in_abstract_harmonic_analysis}
Gerald~B. Folland.
\newblock {\em A course in abstract harmonic analysis}.
\newblock Studies in Advanced Mathematics. CRC Press, Boca Raton, FL, 1995.

\bibitem[{Gra}10]{arXiv:grabowski-2010-2}
{\L}ukasz {Grabowski}.
\newblock {Irrational $l^2$-invariants arising from the lamplighter group},
  September 2010.
\newblock Preprint, available at \url{http://arxiv.org/abs/1009.0229}.

\bibitem[Gra14]{arxiv:grabowski-2010}
{\L}ukasz Grabowski.
\newblock On {T}uring dynamical systems and the {A}tiyah problem.
\newblock {\em Invent. Math.}, 198(1):27--69, 2014.

\bibitem[HJ90]{MR1084815}
Roger~A. Horn and Charles~R. Johnson.
\newblock {\em Matrix analysis}.
\newblock Cambridge University Press, Cambridge, 1990.
\newblock Corrected reprint of the 1985 original.

\bibitem[LL95]{lott-lueck-l2-topological-invariants-of-3-manifolds}
John Lott and Wolfgang L{\"u}ck.
\newblock L 2-topological invariants of 3-manifolds.
\newblock {\em Inventiones mathematicae}, 120(1):15--60, 1995.

\bibitem[LNW08]{Lehner_Neuhauser_Woess:On_the_spectrum_of}
Franz Lehner, Markus Neuhauser, and Wolfgang Woess.
\newblock On the spectrum of lamplighter groups and percolation clusters.
\newblock {\em Math. Ann.}, 342(1):69--89, 2008.

\bibitem[Lov12]{MR3012035}
L{\'a}szl{\'o} Lov{\'a}sz.
\newblock {\em Large networks and graph limits}, volume~60 of {\em American
  Mathematical Society Colloquium Publications}.
\newblock American Mathematical Society, Providence, RI, 2012.

\bibitem[LT14]{MR3110799}
Hanfeng Li and Andreas Thom.
\newblock Entropy, determinants, and {$L^2$}-torsion.
\newblock {\em J. Amer. Math. Soc.}, 27(1):239--292, 2014.

\bibitem[L{\"u}c94]{lueck_approximating_l2_invariants_by_their_finite_dimensional_analogues}
Wolfgang L{\"u}ck.
\newblock Approximating {$L^2$}-invariants by their finite-dimensional
  analogues.
\newblock {\em Geom. Funct. Anal.}, 4(4):455--481, 1994.

\bibitem[L{\"u}c02]{Lueck:Big_book}
Wolfgang L{\"u}ck.
\newblock {\em {$L^2$}-invariants: theory and applications to geometry and
  {$K$}-theory}, volume~44 of {\em Ergebnisse der Mathematik und ihrer
  Grenzgebiete. 3. Folge. A Series of Modern Surveys in Mathematics [Results in
  Mathematics and Related Areas. 3rd Series. A Series of Modern Surveys in
  Mathematics]}.
\newblock Springer-Verlag, Berlin, 2002.

\bibitem[L{\"u}c13]{2013arXiv1310.8564L}
Wolfgang L{\"u}ck.
\newblock {Estimates for spectral density functions of matrices over
  C[Z\^{}d]}, October 2013.
\newblock Preprint, available at \url{http://arxiv.org/abs/1310.8564}.

\bibitem[LW13]{arxiv:lehner_wagner-2010}
Franz Lehner and Stephan Wagner.
\newblock Free lamplighter groups and a question of {A}tiyah.
\newblock {\em Amer. J. Math.}, 135(3):835--849, 2013.

\bibitem[MM87]{MR914426}
F.~Martinelli and L.~Micheli.
\newblock On the large-coupling-constant behavior of the {L}iapunov exponent in
  a binary alloy.
\newblock {\em J. Statist. Phys.}, 48(1-2):1--18, 1987.

\bibitem[Pes08]{Pestov_Hyperlinear_and_sofic_groups_a_brief_guide}
Vladimir~G. Pestov.
\newblock Hyperlinear and sofic groups: a brief guide.
\newblock {\em Bull. Symbolic Logic}, 14(4):449--480, 2008.

\bibitem[PS{\,Z}10]{arxiv:pichot_schick_zuk-2010}
Mika{\"e}l Pichot, Thomas Schick, and Andrzej {\,Z}uk.
\newblock {Closed manifolds with transcendental L2-Betti numbers}.
\newblock {\em ArXiv e-prints}, May 2010.

\bibitem[RS80]{Reed_Simon_Methods_of_moder_mathematical_warfare_I}
Michael Reed and Barry Simon.
\newblock {\em Methods of modern mathematical physics. {I}}.
\newblock Academic Press Inc. [Harcourt Brace Jovanovich Publishers], New York,
  second edition, 1980.
\newblock Functional analysis.

\bibitem[Sau03]{Sauer(2003)}
Roman Sauer.
\newblock Power series over the group ring of a free group and applications to
  {N}ovikov-{S}hubin invariants.
\newblock In {\em High-dimensional manifold topology}, pages 449--468. World
  Sci. Publ., River Edge, NJ, 2003.

\bibitem[Sch01]{MR1828605}
Thomas Schick.
\newblock {$L^2$}-determinant class and approximation of {$L^2$}-{B}etti
  numbers.
\newblock {\em Trans. Amer. Math. Soc.}, 353(8):3247--3265, 2001.

\bibitem[Tho08]{thom_sofic_groups_and_diophantine_approximation}
Andreas Thom.
\newblock Sofic groups and {D}iophantine approximation.
\newblock {\em Comm. Pure Appl. Math.}, 61(8):1155--1171, 2008.

\end{thebibliography}
\end{document}